\pdfoutput=1
\documentclass[reqno]{amsart}
\usepackage{float}
\usepackage{graphicx} 
\usepackage{hyperref}

\usepackage{amsmath, amssymb, amsthm}
\usepackage{comment}
\theoremstyle{theorem}
\newtheorem{thm}{Theorem}[section]
\newtheorem{lemma}[thm]{Lemma}
\newtheorem{prop}[thm]{Proposition}
\newtheorem{cor}[thm]{Corollary}

\theoremstyle{definition}
\newtheorem{defn}[thm]{Definition}
\newtheorem{notation}[thm]{Notation}

\newcommand{\bm}[1]{{\mbox{\boldmath $#1$}}}

\theoremstyle{remark}
\newtheorem{remark}{Remark}

\newcommand{\III}{\textup{I\!I\!I}}
\newcommand{\II}{\textup{I\!I}}

\newcommand{\RIpos}{\begin{picture}(12,0)
\put(1,-2){\includegraphics{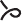}}
\end{picture}}
\newcommand{\RIneg}{\begin{picture}(12,0)
\put(1,-2){\includegraphics{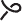}}
\end{picture}}
\newcommand{\RIm}{\begin{picture}(12,0)
\put(2,-2){\includegraphics{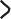}}
\end{picture}}

\title[B-type coefficient polynomial]{B-type coefficient polynomial}
\author{Noboru Ito}
\address{Department of Mathematics, Faculty of Engineering, Shinshu University, Wakasato 4-17-1  Nagano, Nagano,  380-8553, Japan}
\email{nito@shinshu-u.ac.jp}
\author{Mayuko Kon}
\address{Faculty of Education, Shinshu University, 6-Ro, Nishinagano, Nagano City 380-8544, Japan}
\email{mayuko{\_}k@shinshu-u.ac.jp}
\date{March 26, 2026}

\begin{document}
\begin{abstract}
An A-type coefficient polynomial introduced by Kawauchi \cite{Kawauchi1994}
recovers the HOMFLY--PT polynomial as a formal power series 
within skein theory.
A notable feature of this construction is that each coefficient defines
a link invariant, yielding an infinite sequence of invariants,
while the low-degree coefficients are relatively easy to compute.
In this paper, we extend this viewpoint to the B-type setting.
Unlike the A-type case, the B-type setting requires a genuinely new inductive scheme due to the four-term skein relation.  
More precisely, we introduce coefficient polynomials associated with
the B-type skein relation and show that their generating series
recovers the Kauffman polynomial.
We further prove that these coefficient polynomials are well-defined
and that the resulting generating series is invariant under the
corresponding Reidemeister moves.
\end{abstract}
\maketitle

\section{Introduction}
Skein theory has played a fundamental role in low-dimensional topology
as a framework for treating link invariants in a unified manner.
The Jones polynomial and the HOMFLY--PT polynomial
\cite{HOMFLY1985,PrzytyckiTraczyk1987}
are representative examples,
both of which can be characterized by skein relations 
together with normalization conditions.
The Kauffman polynomial \cite{Kauffman1990}
is characterized by a four-term skein relation together with
regular isotopy conditions.

The A-type coefficient polynomial introduced by Kawauchi
\cite{Kawauchi1994}
provides a skein-theoretic construction that recovers the
HOMFLY--PT polynomial as a formal power series.
In this theory, each coefficient can be interpreted as a link invariant,
yielding an infinite sequence of invariants,
and in particular the low-degree coefficients are relatively easy to compute.
From this viewpoint, the A-type coefficient polynomial  may be regarded as a
coefficient-level refinement of the HOMFLY--PT polynomial.

The purpose of this paper is to show that this coefficient-level
construction can be extended to the B-type setting.
More precisely, following the same formal power series framework as in
Kawauchi's construction, we introduce coefficient polynomials
associated with the B-type skein relation and show that their
generating series recovers the Kauffman polynomial.
Thus, the Kauffman polynomial is reconstructed from a family of
coefficient invariants arising from skein theory.

A central point is that the B-type case is not a formal repetition of
the A-type one.
In the A-type case, the well-definedness is based on a three-term skein
relation together with an inductive argument on monotone diagrams.
In contrast, in the B-type case, one first needs a new coefficient-level
formulation of a four-term skein relation, and the same inductive
mechanism does not apply directly.

We overcome this difficulty by refining the initial conditions for the
inductive construction and by introducing a bookkeeping of the change
in the number of components under the two splice operations.
This reflects a structural difference between three-term and four-term
skein relations:
while the inductive construction in the A-type case closes within the
class of monotone diagrams,
the four-term relation naturally produces connected sums of monotone
diagrams.
As a consequence, Kawauchi's method extends to the B-type setting only
after incorporating this new phenomenon into the induction scheme.

As a result, we obtain a coefficient-level extension of Kawauchi's
method to the B-type setting, providing a foundation for further
systematic studies of coefficient invariants associated with
four-term skein relations.

The organization of this paper is as follows.
In Section~\ref{sec:prelim} we introduce the notation and
diagrammatic conventions used throughout the paper.
In Section~\ref{sec:mainresult} we state the main theorem describing
the coefficient polynomials $\alpha_n(D;y)$.
Section~\ref{sec:construction} is devoted to the construction of
$\alpha_n(D;y)$ and the proof of the main theorem using an inductive
argument based on warping crossings.
In Section~\ref{sec:uniqueness} we prove uniqueness for the coefficient
polynomials and for the resulting generating series, and show that the
latter recovers the Kauffman polynomial.
Finally, Section~\ref{sec:discussion} contains remarks on the structure
of the coefficient polynomials and directions for further study.

\section{Preliminaries and notation}\label{sec:prelim}
Throughout this paper, diagrams are considered up to planar isotopy.
We fix the conventions and notation used in the construction
of the coefficient polynomials associated with the B-type skein relation.

\subsection{Link diagrams and crossings}

\begin{defn}[link diagrams]
An \emph{unoriented link diagram} $D$ is a generic immersion
of a disjoint union of circles in the plane with over/under information at each transverse double point; 
an \emph{oriented link diagram} is such a diagram with
an orientation on each component.
\end{defn}

\begin{notation}[$c(D)$, $r(D)$]
Let $c(D)$ be the crossing number of $D$ and $r(D)$ 
the number of components of $D$.
\end{notation}

\begin{defn}[writhe]
Given an oriented diagram $D$, each crossing $p$ has a sign
$\epsilon(p)\in\{+1,-1\}$.
The \emph{writhe} is defined by 
\[
w(D)=\sum_{p~:~ {\textrm{a crossing of $D$}}} \epsilon(p).
\]
\end{defn}

\subsection{Local splices}

\begin{figure}[htbp]
\centering
\begin{picture}(30,30)
\put(0,0){\line(1,1){30}}
\put(30,0){\line(-1,1){10}}
\put(0,30){\line(1,-1){10}}
\put(10,-10){$D_p$}
\end{picture}
\qquad
\begin{picture}(30,30)
\put(10,-10){$D_{\infty}$}
\qbezier(0,0)(10,15)(0,30)
\qbezier(30,30)(20,15)(30,0)
\end{picture}
\qquad
\begin{picture}(30,30)
\put(10,-10){$D_0$}
\qbezier(0,0)(15,10)(30,0)
\qbezier(30,30)(15,20)(0,30)
\end{picture}
\caption{A crossing $D_p$ and its two smoothings
$D_\infty$ and $D_0$.}
\label{fig:splices}
\end{figure}
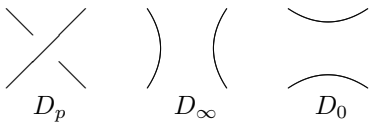

In Figure~\ref{fig:splices}, we denote by $D_\infty$ and $D_0$
the two smoothings of $D$ at a crossing $p$ shown there.
We call the smoothing producing $D_\infty$
(resp.\ $D_0$) the $A$-splice (resp.\ $B$-splice).
Accordingly, we set
\[
D_{p1}=D_{\infty}, \qquad
D_{p2}=D_{0}.
\]


\begin{notation}[Four local diagrams]
We use the notation $(D_+,D_-,D_\infty, D_0)$
for the four standard local diagrams at a crossing.
The skein relation is formulated for unoriented diagrams.

For the purpose of proofs, we may temporarily equip
the strands near $p$ with an auxiliary orientation.
This determines a sign $\epsilon(p)$ and fixes an ordered pair
$
(D_{\epsilon(p)},D_{-\epsilon(p)})
$ which is either 
$(D_+,D_-)$ or $(D_-,D_+)$.
This auxiliary choice is local and fixed within each argument.
\end{notation}

\begin{remark}
For notational convenience, we use the symbols
$p1$ and $p2$ to distinguish the two splices; thus
$D_{p1}=D_\infty$ and $D_{p2}=D_0$.
\end{remark}

\subsection{Component-change quantities}

\begin{defn}[Component indicator]\label{def:delta}
Let $p$ be a crossing of $D$.
Define
\[
\delta(p)=
\begin{cases}
0 & \text{if the strands at $p$ belong to the same component},\\
1 & \text{otherwise}.
\end{cases}
\]
\end{defn}

\begin{defn}[Signed component change]\label{def:Delta}
For the spliced diagrams $D_{p1}$ and $D_{p2}$,
define
\[
\Delta(p1)=r(D_{p1})-r(D),\qquad
\Delta(p2)=r(D_{p2})-r(D).
\]
Thus $\Delta(pi)$ records the signed change in the number of
components under the corresponding splice.
\end{defn}

\begin{remark}
In the inductive construction, the coefficient index shifts
according to
\[
n\mapsto n+\Delta(pi)-1,
\]
and successive splices produce additive shifts such as
\[
n+\Delta(p1)+\Delta(q1)-2.
\]
This shift is essential in formulating the coefficient-level skein relation.
\end{remark}

\subsection{Algebraic conventions}

\begin{notation}\label{not:ringR}
Fix a commutative ring $R$ containing $\mathbb{Z}[y^{\pm1}]$ as a subring.  We write $R[[z]]$ for the ring of formal power series in $z$ with coefficients in $R$.
\end{notation}

\subsection{Coefficient polynomials}

\begin{defn}\label{def:alpha_in_R}
For an unoriented diagram $D$,
we define elements
\[
\alpha_n(D;y)\in R
\qquad (n\in\mathbb{Z}).
\]
\end{defn}

\begin{defn}[Generating series of the coefficient polynomials]\label{def:L_in_R}
Let $D$ be an unoriented diagram.
Define
\[
L_D(y,z)=z^{\,1-r(D)}\sum_{n\ge0}\alpha_n(D;y)z^n \in R[[z]].
\]
\end{defn}

\begin{defn}[Normalization]
For an oriented diagram $D$, define
\[
F_D(y,z)=y^{-w(D)}L_D(y,z).
\]
\end{defn}

\subsection{Base points and warping degree}

\begin{defn}[Sequences of base points and induced directions]\label{def:basepoints}
Let $D$ be an unoriented diagram with $r(D)=r$ components.
A \emph{sequence of base points} is an ordered $r$-tuple
$\bm{a}=(a_1,\dots,a_r)$ with one point $a_i$ on each component.
For each component, we additionally choose a direction of travel starting from $a_i$.
(Equivalently, we choose an orientation on each component; this choice is used only to define warping data.)
\end{defn}

\begin{defn}[Connected sequences of base points]\label{def:connected_basepoints}
Two sequences of base points $\bm{a}=(a_1,\dots,a_r)$ and $\bm{a}'=(a'_1,\dots,a'_r)$ on a diagram $D$ are said to be \emph{connected} if $a_i$ and $a'_i$ belong to the same connected component of $D$ for each $i=1, \dots, r$.
\end{defn}

\begin{defn}[First-encounter rule at a crossing]\label{def:firstencounter}
Fix a based-and-directed diagram $(D,\bm{a})$ as in Definition~\ref{def:basepoints}.
For a crossing $p$ of $D$, traverse each component once starting from its base point in the chosen direction.
Among the two strands meeting at $p$, exactly one of them is encountered first in this traversal.
We call it the \emph{first-encountered strand} at $p$ (with respect to $(D,\bm{a})$).
\end{defn}

\begin{defn}[Warping crossing and warping degree]\label{def:warping}
A crossing $p$ of $D$ is called a \emph{warping crossing point} of $(D,\bm{a})$ 
if the first-encountered strand at $p$ is the \emph{under}-strand of the crossing.
The \emph{warping degree} $d_\bm{a}(D)$ is the number of warping crossing points of $(D,\bm{a})$.
\end{defn}

\begin{defn}[Monotone diagram]\label{def:monotone}
A based-and-directed diagram $(D,\bm{a})$ is called \emph{monotone}
if $d_\bm{a}(D)=0$.
\end{defn}
\begin{notation}[Complexity]
We write
\[
cd(D) = \bigl(c(D), d_{\bm a}(D)\bigr)
\]
for the pair consisting of the crossing number and the warping degree,
which will be used as a complexity measure in the inductive arguments.
\end{notation}




\subsection{Disjoint union and connected sum}

\begin{notation}
Let $O$ denote the zero-crossing knot diagram.
We denote disjoint union by $\sqcup$ and may write $O+D$
for $O\sqcup D$.
\end{notation}

\begin{notation}
We write $D\#D'$ for a connected sum of unoriented diagrams.
When the orientation of $D\#D'$ is needed, it will be specified after summing.    
\end{notation}

\section{Main Result}\label{sec:mainresult}
\begin{thm}\label{maintheorem1}
For every unoriented link diagram $D$, there exists a sequence of coefficient polynomials
\[
\alpha_n(D;y)\in R \qquad (n\in\mathbb Z)
\]
such that the following properties hold.

\begin{itemize}
    \item[$(1)$] $\alpha_{n}(D; y)$ is invariant under the Reidemeister moves of type \II~and type \III, and transforms under the Reidemeister move of type~I as in \eqref{equ1}: 
\begin{align}\label{equ1}
   & \alpha_{n}\left( 
   \RIpos
   ; y\right) = y \alpha_{n}\left(\RIm; y\right),\\
   & \alpha_{n}\left(\RIneg; y\right) = y^{-1} \alpha_{n}\left(\RIm; y \right).\nonumber
 \end{align}
        \item[$(2)$] For the zero-crossing knot diagram $O$,
\begin{equation}\label{equ2}
    \alpha_{n}(O; y) = \delta_{n, 0},
\end{equation}
where $\delta_{n,0}$ denotes the Kronecker delta.
    \item[$(3)$] For $(D_+, D_-, D_{p1}, D_{p2})$ as in Figure~\ref{fig:splices},
\begin{eqnarray}\label{equ3} 
& &\alpha_{n}(D_+; y) + \alpha_{n}(D_-; y) \\
& &= \alpha_{n+\Delta(p1)-1}(D_{p1}; y) + \alpha_{n+\Delta(p2)-1}(D_{p2}; y).\nonumber
\end{eqnarray}
Here, $\Delta(p1)$ and $\Delta(p2)$ are as in Definition~\ref{def:Delta}.  
\end{itemize}
\end{thm}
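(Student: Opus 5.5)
The plan is to obtain the $\alpha_n$ from the existence of the (regular isotopy) Kauffman polynomial and then check the three properties coefficientwise. The paper's own inductive construction via warping degree would instead rebuild that existence from scratch.

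\emph{Input.} Recall that there is a unique regular isotopy invariant $\Lambda_D(a,z)\in\mathbb{Z}[a^{\pm1},z^{\pm1}]$ of unoriented diagrams such that:
\begin{itemize}
\item it is invariant under the Reidemeister moves of type~\II\ and type~\III;
\item it satisfies $\Lambda_O=1$;
\item a positive (resp.\ negative) curl multiplies it by $a$ (resp.\ $a^{-1}$);
\item it satisfies the skein relation $\Lambda_{D_+}+\Lambda_{D_-}=z(\Lambda_{D_\infty}+\Lambda_{D_0})$.
\end{itemize}
Set $a=y$ and regard $\Lambda_D$ as an element of $R[z^{\pm1}]$. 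The first key step is to prove that $z^{r(D)-1}\Lambda_D$ contains no negative powers of $z$. Granting this, define $\alpha_n(D;y)$ as the coefficients in
\[
z^{r(D)-1}\Lambda_D=\sum_{n\ge0}\alpha_n(D;y)z^n ,
\]
and set $\alpha_n=0$ for $n<0$. Then $L_D=\Lambda_D$ in the sense of Definition~\ref{def:L_in_R}.

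\emph{The degree bound, by induction on $cd(D)$ in lexicographic order.} For the base case, suppose $(D,\bm a)$ is monotone. Then $D$ is regularly isotopic, through moves \II\ and \III\ only, to a split union of $r$ curly unknot diagrams. Hence
\[
\Lambda_D=y^{k}\,\bigl((y+y^{-1})z^{-1}-1\bigr)^{r-1}
\]
for some integer $k$, and so $z^{r-1}\Lambda_D$ is a polynomial in $z$.

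For the inductive step, let $p$ be a warping crossing. Crossing change at $p$ lowers $d_{\bm a}$ and keeps $c(D)$ fixed, while both splices lower $c(D)$. The skein relation therefore expresses $\Lambda_D$ through diagrams of smaller complexity, up to the extra factor $z$ on the splice terms. It remains to compare the number of components:
\begin{itemize}
\item If $\delta(p)=1$, both splices merge two components, so $\Delta(p1)=\Delta(p2)=-1$.
\item If $\delta(p)=0$, one splice splits the component and the other does not, so $\{\Delta(p1),\Delta(p2)\}=\{1,0\}$.
\end{itemize}
In every case $\Delta(pi)\ge-1$. Hence
\[
z\cdot z^{1-r(D_{pi})}=z^{1-r(D)}\,z^{1-\Delta(pi)}
\]
has exponent at least $1-r(D)$, and the bound propagates.

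\emph{The three properties.}
\begin{itemize}
\item Property $(1)$: this is read off coefficientwise from regular isotopy invariance and the curl rule, because Reidemeister moves do not change $r(D)$.
\item Property $(2)$: this holds since $\Lambda_O=1$ and $r(O)=1$.
\item Property $(3)$: compare coefficients of $z^{\,n+1-r(D)}$ in the skein relation. On the left this coefficient is $\alpha_n(D_+)+\alpha_n(D_-)$. On the right, the term $z\cdot z^{1-r(D)-\Delta(pi)}\alpha_m(D_{pi})z^m$ contributes exactly when $m=n+\Delta(pi)-1$. This gives \eqref{equ3}.
\end{itemize}
All coefficients lie in $\mathbb{Z}[y^{\pm1}]\subset R$.

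\emph{Main obstacle.} The main obstacle is the lower bound on the $z$-degree, that is, showing $\alpha_n=0$ for $n<0$. This is precisely where the bookkeeping of $\Delta(pi)$ enters, and in particular the fact that $\Delta(pi)\ge-1$ in every case.

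If one wants a self-contained proof that avoids citing the existence of the Kauffman polynomial, define $\alpha_n$ directly by recursion on $cd(D)$. The base values are taken on monotone diagrams via the formula above, and the recursion uses \eqref{equ3} at the first warping crossing. Well-definedness must then be proved in Kawauchi's style: independence of the choice of base points (first among connected sequences, then under reordering), followed by invariance under Reidemeister moves. That verification is where connected sums of monotone diagrams appear, and it would be the hardest part.
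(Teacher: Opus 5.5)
\textbf{Verdict.} Your proof is correct, but it follows a genuinely different route from the paper's.

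\textbf{Comparison with the paper.} You take the existence of Kauffman's regular isotopy invariant $\Lambda_D$ as input. Reidemeister invariance, the curl rule and the four-term relation then come for free. The only thing left to prove is the lower bound: the minimal $z$-degree of $\Lambda_D$ is at least $1-r(D)$. After that, properties (1)--(3) are coefficient comparisons, and your index check $m=n+\Delta(pi)-1$ is right.

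The paper instead builds $\alpha_n(D,\bm a;y)$ from scratch by Kawauchi's induction on $(c(D),d_{\bm a}(D))$. It uses closed-form values on monotone diagrams and on connected sums of them. It then has to prove, at the coefficient level:
\begin{itemize}
\item independence of the warping crossing (Lemma~\ref{warpingcrosspt});
\item independence of the base points;
\item Reidemeister invariance (Lemma~\ref{Reidemeister}).
\end{itemize}
The enlargement to connected sums of monotone diagrams, which is the paper's main new ingredient, never appears in your route. In the paper, the Kauffman polynomial comes out as a \emph{consequence} (Proposition~\ref{prop:Lskein}).

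Your approach is much shorter and avoids all of that well-definedness bookkeeping. The price is that it presupposes Kauffman's existence theorem, so it cannot serve the paper's stated aim of reconstructing the Kauffman polynomial independently at the coefficient level. With your proof, the statement that the generating series recovers the Kauffman polynomial holds by definition. Your degree-bound induction is essentially the paper's argument in Lemma~\ref{propertyofalpha} that $\alpha_n=0$ for $n<0$.

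\textbf{Two small corrections.}
\begin{itemize}
\item The inequality needed in your inductive step is $\Delta(pi)\le 1$, not $\Delta(pi)\ge -1$. The exponent $1-r(D)+(1-\Delta(pi))$ is at least $1-r(D)$ exactly when $\Delta(pi)\le 1$; equivalently, $n<0$ forces $n+\Delta(pi)-1<0$. Your case analysis gives $\Delta(pi)\in\{-1,0,1\}$, so nothing breaks. However, the sentence ``hence \dots\ has exponent at least $1-r(D)$'' and your ``main obstacle'' remark both cite the irrelevant direction.
\item In the base case you do not need the Trace-type claim that a monotone diagram is regularly isotopic, through moves \II\ and \III\ alone, to a split union of curly unknots. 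Since $y^{-w(D)}\Lambda_D$ is an ambient isotopy invariant and a monotone diagram represents the trivial $r$-component link, you get $\Lambda_D=y^{w(D)}\bigl((y+y^{-1})z^{-1}-1\bigr)^{r-1}$ directly. Here $w(D)$ does not depend on the orientation chosen, because all pairwise linking numbers vanish. This identifies your $k$ as $w(D)$ and matches the paper's base formula.
\end{itemize}
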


\section{Construction and Proof of Theorem~\ref{maintheorem1}}\label{sec:construction}
Before proceeding to the proof, we briefly explain the main difference
from the A-type case.
In the A-type construction, the skein relation consists of three terms,
and the inductive argument closes within the class of monotone diagrams.
In contrast, the B-type skein relation involves four terms.
When comparing the expansions at two distinct warping crossings,
the splice terms naturally produce intermediate diagrams
which may decompose as connected sums of monotone diagrams.
For this reason, in proofs of this section, we enlarge the initial class
to include connected sums of monotone diagrams.
Apart from this point, the inductive structure follows the same strategy
as in the A-type case.
The necessity of this enlargement is intrinsic to the four-term nature
of the B-type skein relation and does not arise in the three-term A-type case.

To prove the theorem, we use induction on the crossing number $c(D)=m$. 

When $m=0$, we set
\[
\alpha_{n}(D, \bm{a}; y) = (-1)^{n} \binom{r-1}{n} (y + y^{-1})^{r-n-1},
\]
where $\bm{a}$ is a sequence of base points and $r$ is the number of connected components. Then any diagram $D$ with $m=0$ satisfies properties (1)--(3) in Theorem~\ref{maintheorem1}.

We suppose that for any diagram $D$ with $c(D)<m$, there exists $\alpha_{n}(D; y)$ which satisfies properties (1)--(3) in Theorem~\ref{maintheorem1}. We show that we can construct $\alpha_{n}(D; y)$ for $D$ with $c(D)=m$ by the following steps.
\begin{itemize}
    \item[(i)] For a pair $(D, \bm{a})$ of a diagram and a sequence of base points with $c(D)=m$, we define $\alpha_{n}(D, \bm{a}; y)$.
    \item[(ii)] We show that $\alpha_{n}(D, \bm{a}; y)$ does not depend on the choice of the sequence of base points $\bm{a}$. We denote this by $\alpha_{n}(D; y)$.
    \item[(iii)] For any diagram $D$ with $c(D) \le m$, $\alpha_{n}(D; y)$ satisfies properties (1)--(3).
\end{itemize}

We define $\alpha_{n}(D, \bm{a}; y)$ as follows. 

\begin{defn}\label{defofalpha}
When $D$ is an unoriented monotone diagram or a connected sum of monotone diagrams, we set
\[
\alpha_{n}(D, \bm{a}; y) = y^{w(D)} (-1)^{n} \binom{r-1}{n} (y + y^{-1})^{r-n-1}.
\]
When $s=d_{\bm{a}}(D)>0$, we define $\alpha_{n}(D, \bm{a}; y)$ by induction:
\begin{align}\label{equdef}
    \alpha_{n}(D, \bm{a}; y) &= \alpha_{n}(D_{\epsilon(p)}, \bm{a}; y) \\
    &= -\alpha_{n}(D_{-\epsilon(p)}, \bm{a}; y) + \alpha_{n+\Delta(p1)-1}(D_{p1}; y) + \alpha_{n+\Delta(p2)-1}(D_{p2}; y). \nonumber
\end{align}
Here $\Delta(p1)$ and $\Delta(p2)$ are as in Definition~\ref{def:Delta}.
\end{defn}

\begin{lemma}\label{warpingcrosspt}
Let $D$ be a diagram with $c(D)=m$. When $s>0$, $\alpha_{n}(D, \bm{a}; y)$ does not depend on the choice of the warping crossing point $p$ of $(D, \bm{a})$.
\end{lemma}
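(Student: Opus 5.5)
We argue by a secondary induction on the warping degree $s=d_{\bm a}(D)$, with the crossing number $c(D)=m$ fixed. The definition \eqref{equdef} is recursive, so first we check it is well-founded. Changing the warping crossing $p$ to $D_{-\epsilon(p)}$ gives a diagram with the same crossing number. The base points are unchanged, and $p$ becomes non-warping, so $d_{\bm a}(D_{-\epsilon(p)})=s-1$. The splices $D_{p1},D_{p2}$ have $m-1$ crossings, so the outer induction hypothesis gives well-defined $\alpha_n(D_{pi};y)$ that are independent of base points and satisfy (1)--(3).

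Let $p\neq q$ be two warping crossings of $(D,\bm a)$. Write $D^{p}$ for $D$ with $p$ changed, and similarly $D^{q}$ and $D^{pq}$. Expanding at $p$ gives
\[
-\alpha_n(D^{p},\bm a)+\alpha_{n+\Delta(p1)-1}(D_{p1})+\alpha_{n+\Delta(p2)-1}(D_{p2}).
\]
The diagram $D^{p}$ has warping degree $s-1$. If $s-1>0$, then $q$ is still a warping crossing of $(D^{p},\bm a)$, and the inner induction hypothesis lets us expand $D^{p}$ at $q$. This gives
\[
\alpha_n(D^{pq},\bm a)-\alpha_{n+\Delta(q1)-1}(D^{p}_{q1})-\alpha_{n+\Delta(q2)-1}(D^{p}_{q2}).
\]
If $s-1=0$, then $D^{p}$ is monotone and $D^{pq}$ has a single warping crossing. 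In that case we instead verify directly that the monotone closed formula satisfies the skein relation at $q$. This check is where the enlargement to connected sums of monotone diagrams enters.

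Next we expand each $(m-1)$-crossing term $\alpha(D_{pi})$ at $q$ using property (3), which holds by the outer hypothesis:
\[
\alpha(D_{pi})=-\alpha(D_{pi}^{q})+\alpha(D_{pi,q1})+\alpha(D_{pi,q2}).
\]
Here the indices shift additively as in the Remark after Definition~\ref{def:Delta}. The combined shift is $n+\Delta(pi)+\Delta'(qj)-2$, where $\Delta'(qj)$ is measured relative to $D_{pi}$. This total equals $r(D_{pi,qj})-r(D)+n-2$, which is symmetric in $p$ and $q$. Doing the same for the expansion at $q$ first, both sides reduce to the symmetric quantity
\[
\alpha_n(D^{pq},\bm a)-\sum_i \alpha(D^{q}_{pi})-\sum_j\alpha(D^{p}_{qj})+\sum_{i,j}\alpha(D_{pi,qj}),
\]
with the appropriate shifts. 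The terms $\alpha(D^{q}_{pi})$ and $\alpha(D_{pi}^{q})$ are the same diagram, so they match. The terms $\alpha(D_{pi,qj})$ are defined on $(m-2)$-crossing diagrams and are independent of order. Hence the two expansions agree.

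The main obstacle is the boundary case $s=1$ versus $s=2$. There the expansion lands on a diagram $D^{p}$ or $D^{pq}$ whose value is given by the closed monotone formula, not by recursion. We must then show that the closed formula $y^{w}(-1)^n\binom{r-1}{n}(y+y^{-1})^{r-n-1}$ is compatible with the skein relation at a crossing of a monotone diagram. For this we would use the following facts.

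1. A monotone diagram, or a connected sum of monotone diagrams, is a split union of descending knot diagrams. Crossings between distinct components then lie in a split-type configuration.
2. A self-crossing $p$ of a descending component splices to diagrams $D_{p1},D_{p2}$ with $m-1$ crossings. These are again monotone, or connected sums of monotone diagrams, so the formula applies to them by the outer hypothesis.
3. What remains is a binomial identity. With $r'=r(D_{pi})$, it reads
\[
y^{w}+y^{w-2\epsilon}\ \text{(times the common factor)}=\sum_i \text{(formula at } r',\ n+\Delta(pi)-1).
\]
We would first check it for $\Delta\in\{0,1\}$ and for the cases $\delta(p)=0,1$. Pascal's rule $\binom{r}{n}=\binom{r-1}{n}+\binom{r-1}{n-1}$ together with the factor $(y+y^{-1})$ should close these cases.

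I expect this local verification, and confirming that the splices stay within the enlarged initial class, to be the delicate step. The rest is formal bookkeeping.
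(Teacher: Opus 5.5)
Your double expansion is exactly the paper's proof, and on its own it already proves the lemma. You expand at $p$, expand $D^{p}=D_{-\epsilon(p)}$ at $q$ by the inner induction on $s$, expand the $(m-1)$-crossing splices $D_{pi}$ at $q$ by property (3), and match terms using that $n+\Delta(pi)+\Delta'(qj)-2=n+r(D_{pi,qj})-r(D)-2$ is symmetric in $p$ and $q$.

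The ``main obstacle'' you identify does not exist. Whether $q$ is a warping crossing of $(D,\bm{a})$ depends only on the traversal from $\bm{a}$ and the over/under information at $q$. So if $p\neq q$ are both warping, then $s\ge 2$ (for $s=1$ there is nothing to prove), and $q$ is still warping in $(D^{p},\bm{a})$. Hence $d_{\bm{a}}(D^{p})=s-1\ge 1$, and your case $s-1=0$ never occurs. When $s=2$, expanding $D^{p}$ at $q$ is just Definition~\ref{defofalpha} applied at its unique warping crossing. The monotone diagram $D^{pq}$ appears with coefficient $+1$ in both expansions and is never expanded further. So nowhere in this lemma must the closed formula be tested against the skein relation.

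The verification you sketch, where both $D$ and $D^{p}$ receive the closed formula, is really the $\delta(p)=0$ computation in Lemma~\ref{basepoints}. There a diagram is monotone for one base-point sequence but not for a connected one. That lemma, not this one, is where the paper appeals to connected sums of monotone diagrams.

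The facts you planned to use are also not correct as stated:
\begin{itemize}
\item A monotone diagram is a stacked union, not a split one: distinct components may cross, with one lying entirely over the other.
\item The orientation-consistent splice of a self-crossing of a descending diagram can produce two loops, each passing over the other at some crossing, so the result need not be monotone.
\end{itemize}
Simply drop that part of the proposal.
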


\begin{proof}
Let $q \neq p$ be another warping crossing point of $(D, \bm{a})$. We will show the following equality:
\begin{align*}
&-\alpha_{n}(D_{-\epsilon(p)}, \bm{a}; y) + \alpha_{n+\Delta(p1)-1}(D_{p1}; y) + \alpha_{n+\Delta(p2)-1}(D_{p2}; y)\\
&= - \alpha_{n}(D_{-\epsilon(q)}, \bm{a}; y) + \alpha_{n+\Delta(q1)-1}(D_{q1}; y) + \alpha_{n+\Delta(q2)-1}(D_{q2}; y).
\end{align*}

Since $d_{\bm{a}}(D_{-\epsilon(p)}) < s$ and $q$ is a warping crossing point of $(D_{-\epsilon(p)}, \bm{a})$, by the induction hypothesis on $s$, we have
\begin{align*}
\alpha_{n}(D_{-\epsilon(p)}, \bm{a}; y) &= -\alpha_{n}((D_{-\epsilon(p)})_{-\epsilon(q)}, \bm{a}; y) \\
&\quad + \alpha_{n+\Delta(q1)-1}((D_{-\epsilon(p)})_{q1}; y) + \alpha_{n+\Delta(q2)-1}((D_{-\epsilon(p)})_{q2}; y).
\end{align*}
Next, since $c(D_{p1}) < m$ and $c((D_{p1})_{-\epsilon(q)}) < m$, by the induction hypothesis on the number of crossings, we have
\begin{align*}
&\alpha_{n+\Delta(p1)-1}(D_{p1}; y) + \alpha_{n+\Delta(p1)-1}((D_{p1})_{-\epsilon(q)}; y) \\
&\quad = \alpha_{n+\Delta(p1)+\Delta'(q1)-2}((D_{p1})_{q1}; y) + \alpha_{n+\Delta(p1)+\Delta'(q2)-2}((D_{p1})_{q2}; y).
\end{align*}
Here $\Delta'(q1)$ (resp.~$\Delta'(q2)$)
denotes the signed change in the number of components
under the corresponding splice at $q$
in the diagram $D_{p1}$; namely,
\[
\Delta'(qi)=r\big((D_{p1})_{qi}\big)-r(D_{p1})
\quad (i=1,2).
\]
Similarly, we have
\begin{align*}
&\alpha_{n+\Delta(p2)-1}(D_{p2}; y) + \alpha_{n+\Delta(p2)-1}((D_{p2})_{-\epsilon(q)}; y) \\
&\quad = \alpha_{n+\Delta(p2)+\Delta'(q1)-2}((D_{p2})_{q1}; y) + \alpha_{n+\Delta(p2)+\Delta'(q2)-2}((D_{p2})_{q2}; y).  
\end{align*}
From these equations, we obtain
\begin{align*}
&-\alpha_{n}(D_{-\epsilon(p)}, \bm{a}; y) + \alpha_{n+\Delta(p1)-1}(D_{p1}; y) + \alpha_{n+\Delta(p2)-1}(D_{p2}; y)\\
&\quad =\alpha_{n}((D_{-\epsilon(p)})_{-\epsilon(q)}, \bm{a}; y) \\
&\quad\quad - \alpha_{n+\Delta(q1)-1}((D_{-\epsilon(p)})_{q1}; y) - \alpha_{n+\Delta(q2)-1}((D_{-\epsilon(p)})_{q2}; y) \\
&\quad\quad - \alpha_{n+\Delta(p1)-1}((D_{p1})_{-\epsilon(q)}; y) + \alpha_{n+\Delta(p1)+\Delta'(q1)-2}((D_{p1})_{q1}; y) \\
&\quad\quad + \alpha_{n+\Delta(p1)+\Delta'(q2)-2}((D_{p1})_{q2}; y) - \alpha_{n+\Delta(p2)-1}((D_{p2})_{-\epsilon(q)}; y) \\
&\quad\quad + \alpha_{n+\Delta(p2)+\Delta'(q1)-2}((D_{p2})_{q1}; y) + \alpha_{n+\Delta(p2)+\Delta'(q2)-2}((D_{p2})_{q2}; y).
\end{align*}
Similarly, we have
\begin{align*}
&- \alpha_{n}(D_{-\epsilon(q)}, \bm{a}; y) + \alpha_{n+\Delta(q1)-1}(D_{q1}; y) + \alpha_{n+\Delta(q2)-1}(D_{q2}; y)\\
&\quad = \alpha_{n}((D_{-\epsilon(q)})_{-\epsilon(p)}, \bm{a}; y) \\
&\quad\quad - \alpha_{n+\Delta(p1)-1}((D_{-\epsilon(q)})_{p1}; y) - \alpha_{n+\Delta(p2)-1}((D_{-\epsilon(q)})_{p2}; y) \\
&\quad\quad - \alpha_{n+\Delta(q1)-1}((D_{q1})_{-\epsilon(p)}; y) + \alpha_{n+\Delta(q1)+\Delta'(p1)-2}((D_{q1})_{p1}; y) \\
&\quad\quad + \alpha_{n+\Delta(q1)+\Delta'(p2)-2}((D_{q1})_{p2}; y) - \alpha_{n+\Delta(q2)-1}((D_{q2})_{-\epsilon(p)}; y) \\
&\quad\quad + \alpha_{n+\Delta(q2)+\Delta'(p1)-2}((D_{q2})_{p1}; y) + \alpha_{n+\Delta(q2)+\Delta'(p2)-2}((D_{q2})_{p2}; y). 
\end{align*}
Note that the total change in the number of components is independent of the order of the splices. Thus, we obtain the desired equality.
\end{proof}

\begin{lemma}\label{basepoints}
For any diagram $D$ with $c(D)=m$ and any sequence of base points $\bm{a}$, $\alpha_n(D,\bm{a};y)$ does not depend on the choice of a sequence of base points that is connected to $\bm{a}$.
\end{lemma}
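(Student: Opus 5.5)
We reduce the statement to the case of moving a single base point across a single crossing. Any sequence $\bm{a}'$ connected to $\bm{a}$ is reached from $\bm{a}$ by finitely many elementary moves, each sliding one base point $a_i$ along its component past exactly one crossing $p$, without passing any other base point; the direction of travel is kept fixed. (Reversing the direction of travel on a component can be treated the same way afterwards, or it is excluded by the statement.) So it suffices to show $\alpha_n(D,\bm{a};y)=\alpha_n(D,\bm{a}';y)$ when $\bm{a}'$ differs from $\bm{a}$ by one such elementary move. We argue by induction on the warping degree $s=d_{\bm a}(D)$, inside the induction on $c(D)=m$.

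\emph{Effect of an elementary move.} The move changes the first-encounter status only at $p$, and only if both strands at $p$ belong to the component of $a_i$, i.e.\ $\delta(p)=0$. If $\delta(p)=1$, or $p$ is a self-crossing of another component, then $\bm a$ and $\bm a'$ have the same warping crossings. We run induction on $s$ using the same warping crossing for both sequences. The defining expansion \eqref{equdef} then has identical splice terms, since those do not depend on base points, and $-\alpha_n(D_{-\epsilon(q)},\cdot)$ terms of lower warping degree, which agree by the inductive hypothesis. Lemma~\ref{warpingcrosspt} allows this common choice of warping crossing.

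In the base case $s=0$, the diagrams of Definition~\ref{defofalpha} (monotone diagrams or connected sums of them) get a value depending only on $w(D)$ and $r$, so the two values agree provided both $(D,\bm a)$ and $(D,\bm a')$ lie in this initial class. If only one of them does, we fall into the main case below.

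\emph{Main case: $\delta(p)=0$.} Here $p$ is warping for exactly one of $\bm a,\bm a'$, say for $\bm a$, so $d_{\bm a'}(D)=d_{\bm a}(D)-1$. We expand $\alpha_n(D,\bm a)$ at $p$ by \eqref{equdef}:
\[
\alpha_n(D,\bm a;y)=-\alpha_n(D_{-\epsilon(p)},\bm a;y)+\alpha_{n+\Delta(p1)-1}(D_{p1};y)+\alpha_{n+\Delta(p2)-1}(D_{p2};y).
\]
In $D_{-\epsilon(p)}$, the base point $a_i$ sits immediately before a crossing at which it can be slid to $a_i'$, and $p$ is non-warping for both sequences. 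By the induction on $s$, $\alpha_n(D_{-\epsilon(p)},\bm a)=\alpha_n(D_{-\epsilon(p)},\bm a')$. The required identity then becomes
\[
\alpha_n(D,\bm a';y)+\alpha_n(D_{-\epsilon(p)},\bm a';y)=\alpha_{n+\Delta(p1)-1}(D_{p1};y)+\alpha_{n+\Delta(p2)-1}(D_{p2};y),
\]
which is the skein relation at $p$ for the pair $(D,D_{-\epsilon(p)})$ based at $\bm a'$.

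\emph{The main obstacle} is proving this skein identity when $p$ is not the crossing used to define $\alpha_n(D,\bm a')$. The key geometric input is that, when $a_i$ lies adjacent to $p$, the crossing $p$ is a kink-type crossing: a self-crossing of one component with the base point just before it. I expect this to force one of the splices at $p$ to split that component, i.e.\ $\Delta=+1$ for one splice and $\Delta=0$ for the other, with the splice diagrams differing from $D$ in a controlled way. I would then argue by a secondary induction on $d_{\bm a'}(D)$:
\begin{itemize}
\item[(a)] If $d_{\bm a'}(D)>0$, expand both $\alpha_n(D,\bm a')$ and $\alpha_n(D_{-\epsilon(p)},\bm a')$ at a common warping crossing $q\neq p$. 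Then commute the $p$- and $q$-skein relations on the lower-crossing splice diagrams, using the induction hypothesis on $c$, exactly as in the proof of Lemma~\ref{warpingcrosspt}.
\item[(b)] If $d_{\bm a'}(D)=0$, then $D$ and $D_{-\epsilon(p)}$ are monotone with respect to $\bm a'$, up to the connected-sum enlargement. Verify the identity directly from the closed formula $y^{w}(-1)^n\binom{r-1}{n}(y+y^{-1})^{r-n-1}$. This uses that the splices of a monotone diagram at a crossing adjacent to the base point are again monotone or connected sums of monotone diagrams, with writhe changes matching the factor $y^{\pm1}$. It also uses the Pascal identity for the binomial coefficients together with $(y+y^{-1})$ absorbing the $y^{w}$ difference.
\end{itemize}
This last computation is where I expect the real difficulty. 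In particular, one must check that the connected-sum diagrams arising in the splices are assigned values consistent with the formula, which is exactly why Definition~\ref{defofalpha} enlarges the initial class.
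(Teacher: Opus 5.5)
\textbf{There is a genuine gap in your main case $\delta(p)=0$.} You assert that $p$ is non-warping for both sequences in $D_{-\epsilon(p)}$, and this is false. Let $X$ be the strand of $p$ met first from $a_i$, and $Y$ the one met first from $a'_i$. In $D$, $X$ is under and $Y$ is over, and the crossing change swaps this.

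\begin{itemize}
\item Hence $p$ is non-warping for $(D_{-\epsilon(p)},\bm a)$ but warping for $(D_{-\epsilon(p)},\bm a')$. So $d_{\bm a}(D_{-\epsilon(p)})=s-1$ and $d_{\bm a'}(D_{-\epsilon(p)})=s$.
\item The triple $(D_{-\epsilon(p)};\bm a,\bm a')$ is therefore the same configuration as $(D;\bm a',\bm a)$ with the roles exchanged. No induction parameter has dropped.
\item Consequently the equality $\alpha_n(D_{-\epsilon(p)},\bm a)=\alpha_n(D_{-\epsilon(p)},\bm a')$, which you take from the induction hypothesis, is exactly as hard as the lemma itself.
\end{itemize}

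Dually, the identity you call the main obstacle is free. Since $p$ is warping for $(D_{-\epsilon(p)},\bm a')$, it is just the defining expansion \eqref{equdef} of $\alpha_n(D_{-\epsilon(p)},\bm a')$ at $p$. Putting the two expansions at $p$ together only gives the tautology $\alpha_n(D,\bm a)-\alpha_n(D,\bm a')=-\bigl(\alpha_n(D_{-\epsilon(p)},\bm a)-\alpha_n(D_{-\epsilon(p)},\bm a')\bigr)$. So expanding at $p$ is circular. The same confusion appears in (b): when $d_{\bm a'}(D)=0$, the diagram $D_{-\epsilon(p)}$ is monotone with respect to $\bm a$, not $\bm a'$.

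\textbf{The fix, which is the paper's route, is never to expand at $p$ in the inductive step.}
\begin{itemize}
\item Every warping crossing $q\neq p$ is warping for both $\bm a$ and $\bm a'$, since the elementary move only affects the status of $p$.
\item Expanding both values at $q$ gives the same sign-flip relation with $D_{-\epsilon(q)}$ in place of $D_{-\epsilon(p)}$. Now both warping degrees decrease.
\item Iterating reduces to the case where $p$ is the only warping crossing, i.e.\ $d_{\bm a}(D)=1$ and $d_{\bm a'}(D)=0$. The cases with $\delta(p)=1$ are easy, as you note.
\item Only there do you expand $\alpha_n(D,\bm a)$ at $p$. Both $(D_{-\epsilon(p)},\bm a)$ and $(D,\bm a')$ are monotone. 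The splices $D_{p1},D_{p2}$ are monotone or connected sums of monotone diagrams, with $\{\Delta(p1),\Delta(p2)\}=\{0,1\}$.
\item The closed formula, together with $y^{-\epsilon(p)}(y+y^{-1})-y^{-2\epsilon(p)}=1$ and Pascal's rule, then gives $\alpha_n(D,\bm a)=\alpha_n(D,\bm a')$.
\end{itemize}

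Your computation in (b) is essentially this one, so the ingredients are present. But as organized, your argument does not close.
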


\begin{proof}
It is sufficient to show $\alpha_n(D,\bm{a};y)=\alpha_n(D,\bm{a}'; y)$ for the sequences of base points
\begin{align*}
\bm{a}  &= (a_1, \dots, a_{i-1}, a_i, a_{i+1}, \dots, a_r), \\
\bm{a}' &= (a_1, \dots, a_{i-1}, a'_i, a_{i+1}, \dots, a_r)
\end{align*}
as shown in Figure~\ref{WarpingCross}.

\begin{figure}[htbp]
    \centering
\includegraphics[width=0.8\linewidth]{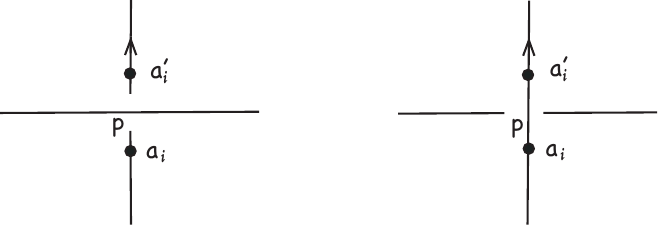}
    \caption{A connected change of the sequence of base points. The base point $a_i$ moves along the component across the crossing $p$ to $a'_i$.}
    \label{WarpingCross}
\end{figure}

Suppose that $q \neq p$ is another warping crossing point. Then we have
\begin{align*}
\alpha_{n}(D, \bm{a}; y) + \alpha_{n}(D_{-\epsilon(q)}, \bm{a}; y) &= \alpha_{n+\Delta(q1)-1}(D_{q1}; y) + \alpha_{n+\Delta(q2)-1}(D_{q2}; y), \\
\alpha_{n}(D, \bm{a}'; y) + \alpha_{n}(D_{-\epsilon(q)}, \bm{a}'; y) &= \alpha_{n+\Delta(q1)-1}(D_{q1}; y) + \alpha_{n+\Delta(q2)-1}(D_{q2}; y).
\end{align*}
By the induction hypothesis on the number of crossing points, we obtain
\[
\alpha_{n}(D, \bm{a}; y) - \alpha_{n}(D, \bm{a}'; y) = \alpha_{n}(D_{-\epsilon(q)}, \bm{a}'; y) - \alpha_{n}(D_{-\epsilon(q)}, \bm{a}; y).
\]

By applying crossing changes away from $p$ if necessary, the diagram $D$ becomes one of the following:
\begin{itemize}
    \item $\delta(p)=1$ and $d_{\bm{a}}(D)=d_{\bm{a}'}(D)=1$, or $\delta(p)=1$ and $d_{\bm{a}}(D)=d_{\bm{a}'}(D)=0$;
    \item $\delta(p)=0$ and $d_{\bm{a}}(D)=1, d_{\bm{a}'}(D)=0$;
    \item $\delta(p)=0$ and $d_{\bm{a}}(D)=0, d_{\bm{a}'}(D)=1$.
\end{itemize}

When $d_{\bm{a}}(D)=d_{\bm{a}'}(D)=0$, we have
\[
\alpha_{n}(D, \bm{a}; y) = y^{w(D)}(-1)^n \binom{r-1}{n} (y+y^{-1})^{r-n-1} = \alpha_{n}(D, \bm{a}'; y).
\]

If $d_{\bm{a}}(D)=d_{\bm{a}'}(D)=1$, then the diagram $D_{-\epsilon(p)}$ is a monotone diagram. Thus, we obtain
\[
\alpha_{n}(D_{-\epsilon(p)}, \bm{a}; y) = \alpha_{n}(D_{-\epsilon(p)}, \bm{a}'; y).
\]
Therefore, we have
\begin{align*}
\alpha_{n}(D, \bm{a}; y) &= -\alpha_{n}(D_{-\epsilon(p)}, \bm{a}; y) + \alpha_{n+\Delta(p1)-1}(D_{p1}; y) + \alpha_{n+\Delta(p2)-1}(D_{p2}; y) \\
&= -\alpha_{n}(D_{-\epsilon(p)}, \bm{a}'; y) + \alpha_{n+\Delta(p1)-1}(D_{p1}; y) + \alpha_{n+\Delta(p2)-1}(D_{p2}; y) \\
&= \alpha_{n}(D, \bm{a}'; y).
\end{align*}

Now, let us consider the case where $\delta(p)=0$, $d_{\bm{a}}(D)=1$, and $d_{\bm{a}'}(D)=0$. Note that the number of crossing points of $D_{p1}$ and $D_{p2}$ is $m-1$. By taking a suitable sequence of base points and directions, $D_{p1}$ and $D_{p2}$ become monotone diagrams or connected sums of monotone diagrams. Using \eqref{equdef}, we have
\[
\alpha_{n}(D, \bm{a}; y) = -\alpha_{n}(D_{-\epsilon(p)}, \bm{a}; y) + \alpha_{n+\Delta(p1)-1}(D_{p1}; y) + \alpha_{n+\Delta(p2)-1}(D_{p2}; y).
\]
Since $\delta(p)=0$, either $\Delta(p1)=1$ and $\Delta(p2)=0$, or $\Delta(p1)=0$ and $\Delta(p2)=1$ holds. In either case, we obtain
\begin{align*}
&-\alpha_{n}(D_{-\epsilon(p)}, \bm{a}; y) + \alpha_{n+\Delta(p1)-1}(D_{p1}; y) + \alpha_{n+\Delta(p2)-1}(D_{p2}; y)\\
&\quad = -y^{w(D)-2\epsilon(p)}(-1)^n \binom{r-1}{n} (y+y^{-1})^{r-n-1} \\
&\quad\quad + y^{w(D)-\epsilon(p)}(-1)^n \binom{r}{n} (y+y^{-1})^{r-n} \\
&\quad\quad + y^{w(D)-\epsilon(p)}(-1)^{n-1} \binom{r-1}{n-1} (y+y^{-1})^{r-n} \\
&\quad = y^{w(D)}(-1)^{n-1}(y+y^{-1})^{r-n-1} \\
&\quad\quad \cdot \left\{ y^{-2\epsilon(p)} \binom{r-1}{n} - y^{-\epsilon(p)} \binom{r}{n}(y+y^{-1}) + y^{-\epsilon(p)} \binom{r-1}{n-1}(y+y^{-1}) \right\}.
\end{align*}
Since $\epsilon(p)=\pm 1$, by a straightforward computation, we have
\begin{align*}
&y^{-2\epsilon(p)} \binom{r-1}{n} - y^{-\epsilon(p)} \binom{r}{n}(y+y^{-1}) + y^{-\epsilon(p)} \binom{r-1}{n-1}(y+y^{-1})\\
&\quad = -\binom{r-1}{n}.
\end{align*}
Thus, we see that 
\[
\alpha_{n}(D, \bm{a}; y) = y^{w(D)}(-1)^{n}\binom{r-1}{n} (y+y^{-1})^{r-n-1}.
\]
\end{proof}

\begin{lemma}\label{Reidemeister}
When $c(D) \le m$, the Laurent polynomials $\alpha_{n}(D;y)$ satisfy
\begin{align*}
\alpha_{n}\left(\RIpos; y\right) &= y \alpha_{n}\left(\RIm; y\right), \\
\alpha_{n}\left(\RIneg; y\right) &= y^{-1} \alpha_{n}\left(\RIm; y\right).
\end{align*}
Moreover, they are invariant under Reidemeister moves of types \II~and \III.
\end{lemma}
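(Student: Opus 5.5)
We follow Kawauchi's scheme for the A-type case. We argue by induction on the complexity $cd(D)=(c(D),d_{\bm a}(D))$, ordered lexicographically. By Lemmas~\ref{warpingcrosspt} and~\ref{basepoints} we may compute $\alpha_n(D;y)$ using any warping crossing and any convenient choice of base points. Let $D$ and $D'$ be related by a Reidemeister move inside a disk $B$, with $c(D),c(D')\le m$. We fix base points outside $B$, chosen so that the strands passing through $B$ are traversed in a prescribed order. The core of the argument is a reduction to the case where the part of the diagram outside $B$ has no warping crossings.

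\emph{Reduction step.} Suppose there is a warping crossing $q$ outside $B$. We expand both $D$ and $D'$ at $q$ using \eqref{equdef}. Because the move is supported in $B$, the four resulting pairs are again related by the same move:
\[
(D_{-\epsilon(q)},D'_{-\epsilon(q)}),\qquad (D_{q1},D'_{q1}),\qquad (D_{q2},D'_{q2}).
\]
We also need the component shifts to agree, that is, $\Delta(qi)$ must be the same for $D$ and $D'$. This holds because Reidemeister moves preserve the connectivity of strands outside $B$, so the components are matched. The spliced pairs have fewer crossings, so the induction hypothesis on $c$ applies to them; for the writhe factor $y^{\pm1}$ in move~I, the hypothesis already includes the appropriate power. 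The crossing-changed pair has smaller warping degree outside $B$. Hence the identity for $(D,D')$ follows from the identities for these four pairs by linearity. Inside $B$, we can also arrange that no crossings are warping, by choosing base points and directions suitably for the local strands; for move~III this requires ordering the three strands top, middle, bottom.

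\emph{Base case.} Now $(D,\bm a)$ and $(D',\bm a')$ are both monotone, or connected sums of monotone diagrams. The first clause of Definition~\ref{defofalpha} then applies directly to both. A Reidemeister move changes neither the number of components $r$ nor, for moves~II and~III, the writhe $w$. So $\alpha_n$ is unchanged for moves~II and~III. For move~I the writhe changes by $\pm1$ and we obtain the factor $y^{\pm1}$, as required.

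\emph{Main obstacle.} We expect the difficulty to lie in guaranteeing monotonicity on both sides at the same time. For move~II with strands from different components, or with the strands traversed in the "wrong" order, the base point on one strand may force one of the two new crossings to be warping. In that case we would instead expand $D'$ at that warping crossing $p$ inside $B$. The term $D'_{-\epsilon(p)}$ then contains a removable bigon of the opposite kind. The splice terms are a diagram with one kink, which Reidemeister~I relates to $D$ and to which the induction on $c$ applies, and a diagram isotopic to $D$ with $\Delta$ shifted. We would then verify the needed cancellation explicitly, by the same binomial identity computed in Lemma~\ref{basepoints}. If this direct verification fails, the fallback is to move base points using Lemma~\ref{basepoints}, choosing them so that the over-strand of the bigon is traversed first.
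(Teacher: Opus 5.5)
Your handling of warping crossings outside the move disk $B$ is fine and matches the paper's inductive step. The gap is the claim that the crossings inside $B$ can be made non-warping "by choosing base points and directions suitably."

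At this stage the only freedom available is Lemma~\ref{basepoints}, which slides each $a_i$ along its own component. It does not let you reorder the components in $\bm a$ or reverse directions. Independence of those choices is proved only afterwards, and the paper's proof of it invokes the present lemma.

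Consider a type~\II\ move whose two strands lie on different components, with the under-strand's component first in $\bm a$. Then both bigon crossings are warping wherever the base points sit. For type~\III, the top--middle--bottom order is likewise unreachable in general. With three components it is fixed by the ordering. Even with a single component, sliding the base point only realizes the three cyclic rotations of the traversal order.

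Your "main obstacle" paragraph sees the problem, but the proposed remedy fails for several reasons:
\begin{itemize}
\item With base points outside $B$, the same arc is met first at both bigon crossings, so both are warping, not just one.
\item Changing one crossing of an RII bigon produces a clasp, not a removable bigon of the opposite kind.
\item The splices at a bigon crossing are a cup--cap carrying a kink and a single diagonal crossing joining the two strands. Neither is related to $D$.
\item The fallback of moving base points by Lemma~\ref{basepoints} cannot change the component order, so it fails for the same reason.
\end{itemize}

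The paper supplies the missing ingredients. First, for two strands on different components, flipping both crossings of a bigon (Figure~\ref{CCRIIa}) leaves $\alpha_n(D,\bm a;y)$ unchanged. This is proved by expanding at $p$ and then at $q$, which gives \eqref{eq:doubleRII}. The four splice terms cancel in pairs by the crossing-number induction: the two single-crossing tangles coincide, and the two kinked cup--caps carry kinks of the same sign. This flip lets you put the first-traversed strand on top without reordering components.

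Second, for type~\III\ the paper does not try to remove warping crossings from the disk. It matches them one-to-one (the crossings $A$ and $q$ in Figure~\ref{WarpingRIII} may be warping). It then checks that the spliced pairs at such a crossing are identical or related by two type~\II\ moves, so the induction on $c$ applies.

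For type~I, a base point outside $B$ with a fixed direction forces the status of the kink crossing. This is easily repaired by placing the base point on the kink loop itself, which Lemma~\ref{basepoints} permits, as in Figure~\ref{WarpingRI}.

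Without these ingredients, or a prior proof that component order and directions can be changed, your reduction to the monotone base case does not go through.
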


\begin{proof}
Let $D'$ be a diagram obtained from $D$ by a Reidemeister move. First, we show that by choosing suitable base points, the warping crossing points of $(D, \bm{a})$ and those of $(D', \bm{a})$ are in one-to-one correspondence. 

If $D'$ is obtained by a Reidemeister move of type~I, by choosing a base point $a_i$ as shown in Figure~\ref{WarpingRI}, we obtain the desired one-to-one correspondence.

\begin{figure}[htbp]
    \centering
\includegraphics[width=0.8\linewidth]{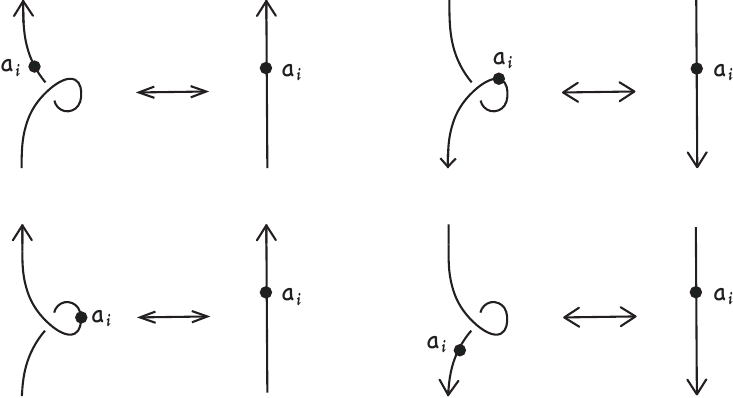}
    \caption{Correspondence of the sequence of base points under a Reidemeister move of type~I. The four possible orientation cases are shown.}
    \label{WarpingRI}
\end{figure}

Next, we show that when two strands belong to different components of $D$, $\alpha_n(D, \bm{a}; y)$ is invariant under the local transformation shown in Figure~\ref{CCRIIa}; that is, $\alpha_n(D, \bm{a}; y) = \alpha_n(D', \bm{a}; y)$. By \eqref{equdef}, we have
\begin{align*}
\alpha_n(D_{\epsilon(p)}, \bm{a}; y) &= -\alpha_n(D_{-\epsilon(p)}, \bm{a}; y) +  \alpha_{n+\Delta(p1)-1} (D_{p1}; y) +  \alpha_{n+\Delta(p2)-1} (D_{p2}; y).
\end{align*}

\begin{figure}[htbp]
    \centering
    \includegraphics[width=0.5\linewidth]{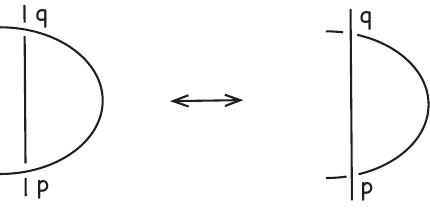}
    \caption{A local move relating diagrams $D$ (left) and $D'$ (right). The crossings $p$ and $q$ correspond under this move.}
    \label{CCRIIa}
\end{figure}

Again using \eqref{equdef}, we obtain
\begin{align*}
&-\alpha_n(D_{-\epsilon(p)}, \bm{a}; y) \\
&= \alpha_n(D', \bm{a}; y) - \alpha_{n+\Delta(q1)-1} ((D_{-\epsilon(p)})_{q1}; y) - \alpha_{n+\Delta(q2)-1} ((D_{-\epsilon(p)})_{q2}; y).
\end{align*}

From these equations, we have
\begin{equation}\label{eq:doubleRII}
\begin{split}
\alpha_n(D, \bm{a}; y) &= \alpha_n(D', \bm{a}; y) + \alpha_{n-1}(D_{p1}; y) + \alpha_{n-1}(D_{p2}; y)\\
&\quad - \alpha_{n-1}((D_{-\epsilon(p)})_{q1}; y) - \alpha_{n-1}((D_{-\epsilon(p)})_{q2}; y).
\end{split}
\end{equation}

For any orientation of the strands, by the induction hypothesis on the number of crossing points, we see that
\[
\alpha_n(D, \bm{a}; y) = \alpha_n(D',\bm{a}; y).
\]
\begin{figure}[htbp]
    \centering
\includegraphics[width=0.8\linewidth]{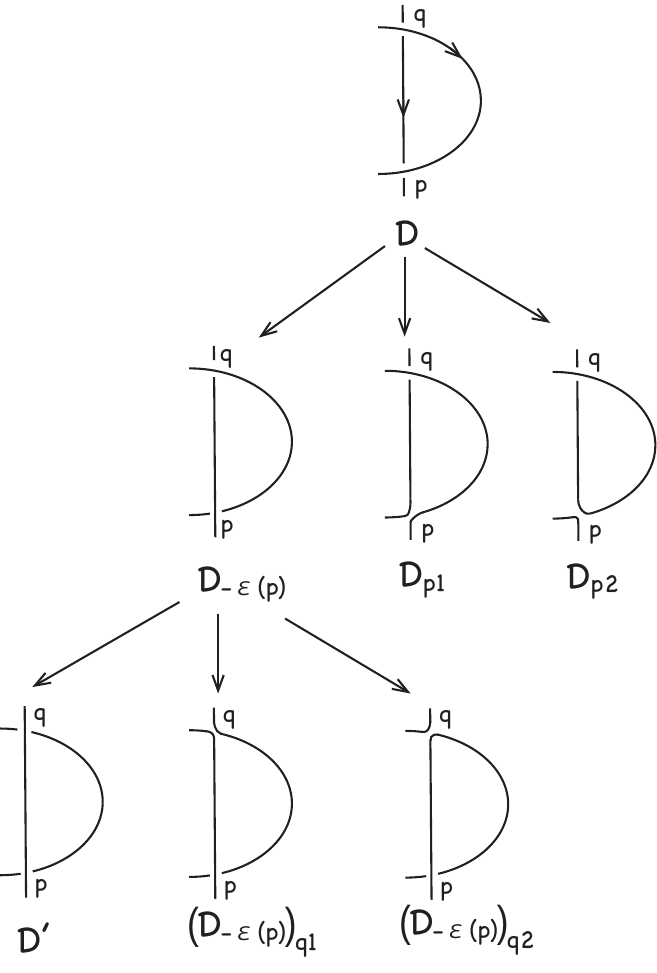}
    \caption{Correspondence of the six diagrams appearing in equation \eqref{eq:doubleRII}, obtained by applying the skein relation twice. The diagram $D_{-\epsilon(p)}$ is further expanded at the crossing $q$. The opposite orientation case, corresponding to a non-braid type bigon, is omitted.}
    \label{doubleRII}
\end{figure}
Figure~\ref{doubleRII} illustrates one case of the above calculation. For the other orientations, the same argument works. The details are left to the reader.  

Using this, we can adjust the upper and lower positions of the strands. By Lemma~\ref{warpingcrosspt}, setting the base point $a_i$ as shown in Figure~\ref{WarpingRII} yields a one-to-one correspondence.

\begin{figure}[htbp]
    \centering
    \includegraphics[width=1.0\linewidth]{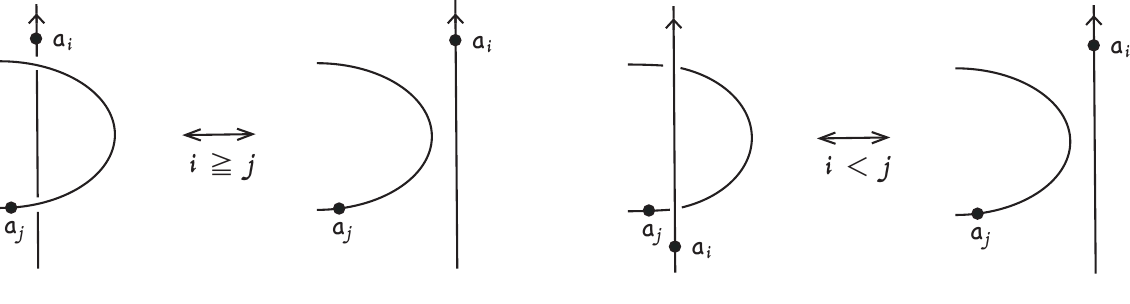}
    \caption{Correspondence of the sequence of base points under a Reidemeister move of type~\II. The two cases $i \ge j$ and $i < j$ are shown.}
    \label{WarpingRII}
\end{figure}
Moreover, for a Reidemeister move of type~\III, we have a one-to-one correspondence as shown in Figure~\ref{WarpingRIII}.

\begin{figure}[htbp]
    \centering
\includegraphics[width=0.5\linewidth]{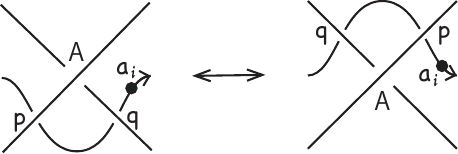}
    \caption{Correspondence of the sequence of base points under a Reidemeister move of type~\III. The crossings $p$ and $q$ correspond under the move.}
    \label{WarpingRIII}
\end{figure}

In the following, we proceed by induction on $s = d_{\bm{a}}(D) = d_{\bm{a}}(D')$. When $D$ is a monotone diagram or a connected sum of monotone diagrams, we have
\begin{align*}
\alpha_n(D,\bm{a};y) &= y^{w(D)}(-1)^{n} \binom{r-1}{n} (y+y^{-1})^{r-n-1},\\
\alpha_n(D',\bm{a};y) &= y^{w(D')}(-1)^{n} \binom{r-1}{n} (y+y^{-1})^{r-n-1}.
\end{align*}
From this, we see that
\begin{align*}
\alpha_{n}\left(\RIpos; y\right) &= y \alpha_{n}\left(\RIm; y\right), \\
\alpha_{n}\left(\RIneg; y\right) &= y^{-1} \alpha_{n}\left(\RIm; y\right).
\end{align*}
Since Reidemeister moves of types~\II~and \III~do not change the writhe, we have $\alpha_n(D,\bm{a};y)=\alpha_n(D',\bm{a};y)$.

Next, we assume that the result holds for $d_{\bm{a}_1}(D_1)=d_{\bm{a}_1}(D_1') < s$, where the warping crossing points of $D$ and $D'$ are assumed to be in one-to-one correspondence, as discussed above. Let $p$ be a corresponding warping crossing point of $(D, \bm{a})$ and $(D', \bm{a})$. Then we have
\begin{align*}
\alpha_n(D,\bm{a};y) &= -\alpha_n(D_{-\epsilon(p)}, \bm{a}; y) + \alpha_{n+\Delta(p1)-1}(D_{p1}; y) + \alpha_{n+\Delta(p2)-1}(D_{p2}; y),\\
\alpha_n(D',\bm{a};y) &= -\alpha_n(D'_{-\epsilon(p)}, \bm{a}; y) + \alpha_{n+\Delta(p1)-1}(D'_{p1}; y) + \alpha_{n+\Delta(p2)-1}(D'_{p2}; y).
\end{align*}

If $D'$ is the diagram obtained from $D$ by a Reidemeister move of type~I, by the induction hypothesis on $s$, we have
\[
\alpha_n(D_{-\epsilon(p)},\bm{a};y) = y^{\tau} \alpha_n(D'_{-\epsilon(p)},\bm{a};y).
\]
Here, $\tau\in\{+1,-1\}$ denotes the writhe change associated with the type~I move.  
If $D'$ is obtained by a Reidemeister move of types~\II~or \III, we have
\[
\alpha_n(D_{-\epsilon(p)},\bm{a};y) = \alpha_n(D'_{-\epsilon(p)},\bm{a};y).
\]

We distinguish two cases: first, the case where the chosen warping
crossing is not one of the three crossings appearing in
Figure~\ref{WarpingRIII}; second, the case where it is one of those
three crossings.

In the former case, $(D_{p1}, D'_{p1})$ and $(D_{p2}, D'_{p2})$ are
pairs of diagrams with at most $m-1$ crossings related by a
Reidemeister move, respectively.

By the induction hypothesis on the crossing number $m$, for a Reidemeister move of type~I, we have
\begin{align*}
\alpha_{n+\Delta(p1)-1}(D_{p1}; y) &= y^\tau \alpha_{n+\Delta(p1)-1}(D'_{p1}; y),\\
\alpha_{n+\Delta(p2)-1}(D_{p2}; y) &= y^\tau \alpha_{n+\Delta(p2)-1}(D'_{p2}; y).
\end{align*}

For Reidemeister moves of types~\II~and \III, we obtain
\begin{align*}
\alpha_{n+\Delta(p1)-1}(D_{p1}; y) &= \alpha_{n+\Delta(p1)-1}(D'_{p1}; y),\\
\alpha_{n+\Delta(p2)-1}(D_{p2}; y) &= \alpha_{n+\Delta(p2)-1}(D'_{p2}; y).
\end{align*}

In the latter case, let $u$ denote the chosen crossing in
Figure~\ref{WarpingRIII}.
Note that in Figure~\ref{WarpingRIII}, the crossings labeled $A$ and $q$
can be warping crossing points, whereas $p$ cannot.
We see that $(D_{u1}, D'_{u1})$ and $(D_{u2}, D'_{u2})$ are pairs of diagrams that are either identical or related by two Reidemeister moves of type~\II, having at most $m-1$ crossings. Thus, by the induction hypothesis on the crossing number $m$, we have
\begin{align*}
\alpha_{n+\Delta(u1)-1}(D_{u1}; y) &= \alpha_{n+\Delta(u1)-1}(D'_{u1}; y),\\
\alpha_{n+\Delta(u2)-1}(D_{u2}; y) &= \alpha_{n+\Delta(u2)-1}(D'_{u2}; y).
\end{align*}
This completes the proof.
\end{proof}

\begin{lemma}
For any diagram $D$ with $c(D)=m$, $\alpha_n(D, \bm{a}; y)$ does not depend on the choice of the sequence of base points $\bm{a}$.
\end{lemma}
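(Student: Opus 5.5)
By Lemma~\ref{basepoints}, $\alpha_n(D,\bm{a};y)$ is already independent of the choice within a connected class of base point sequences. Two sequences $\bm{a}$ and $\bm{b}$ on $D$ can therefore differ in only two ways: the order in which the components are listed is permuted, and the direction of travel on some component is reversed. My plan is to treat both by induction on the warping degree $s=d_{\bm a}(D)$, with the crossing-number hypothesis from Step (iii) available for diagrams with fewer than $m$ crossings. Since any permutation is a product of adjacent transpositions, it suffices to show invariance under two elementary changes:
\begin{itemize}
\item swapping $a_i$ and $a_{i+1}$;
\item reversing the direction on a single component.
\end{itemize}

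\textbf{Reduction by crossing changes.} As in the proof of Lemma~\ref{basepoints}, apply \eqref{equdef} at a crossing $q$ that is warping for both $(D,\bm a)$ and $(D,\bm b)$. The splice terms $\alpha(D_{q1})$ and $\alpha(D_{q2})$ have fewer crossings, so they are already base-point independent and they cancel. This gives
\[
\alpha_n(D,\bm a;y)-\alpha_n(D,\bm b;y)=\alpha_n(D_{-\epsilon(q)},\bm b;y)-\alpha_n(D_{-\epsilon(q)},\bm a;y).
\]
Iterating, we may assume that $D$ has no crossing that is warping for both sequences. Every remaining crossing is then warping for at most one of $\bm a$, $\bm b$.

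\textbf{Transposition case.} Suppose $\bm b$ is obtained by swapping $a_i$ and $a_{i+1}$. The first-encounter rule changes only at crossings between the $i$-th and $(i+1)$-th components. After the reduction, every such crossing is warping for exactly one of the two orders. Self-crossings and crossings involving other components are non-warping for both.

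Now change, one at a time, the mixed crossings that are warping for $\bm a$. Each change is an application of \eqref{equdef}, and the splice terms at a mixed crossing have $c<m$ and are base-point independent. Doing this carefully should give two expressions:
\begin{itemize}
\item $\alpha_n(D,\bm a;y)$ expressed through the monotone diagram $D^{\bm a}$ for $\bm a$, plus lower-crossing terms;
\item $\alpha_n(D,\bm b;y)$ expressed through the monotone diagram $D^{\bm b}$ for $\bm b$, plus lower-crossing terms.
\end{itemize}
In $D^{\bm a}$ and $D^{\bm b}$, component $i$ lies entirely over, respectively entirely under, component $i+1$, and each of them is otherwise monotone. The key observation is that both are monotone, and their writhes agree, because crossing signs between two components are unchanged by the overall layering. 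So the closed formula of Definition~\ref{defofalpha} gives the same value for both. I then expect the accumulated lower-crossing terms to match up pairwise by the crossing-number induction, together with Reidemeister~II/III invariance (Lemma~\ref{Reidemeister}) for diagrams with $c<m$.

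\textbf{Direction-reversal case.} Suppose $\bm b$ reverses the direction on component $i$. Reversing direction is equivalent to moving the base point along the component while reading the self-crossings of that component in the opposite order. I will imitate the case analysis of Lemma~\ref{basepoints}: after the reduction, only one self-crossing $p$ of component $i$ needs to be considered, with $\delta(p)=0$. The explicit binomial identity already computed in that proof then shows that a single warping self-crossing contributes the monotone value.

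\textbf{Main obstacle.} The hardest step is the bookkeeping in the transposition case: showing that the sums of splice terms produced by the two different sequences of crossing changes agree. My first attempt will be to choose the order of the crossing changes so that each splice diagram arising from $\bm a$ is related to the corresponding splice diagram from $\bm b$ by Reidemeister moves on a diagram with $c<m$. Lemma~\ref{Reidemeister} at level $m-1$ would then close the argument. The index shifts $\Delta$ agree because the shift depends only on the total number of components, not on the order of the splices.
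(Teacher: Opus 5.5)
\textbf{There is a genuine gap.} The step you call the main obstacle is the entire content of the lemma, and the pairing mechanism you propose cannot supply it.

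\textbf{The transposition case.} Your reduction is stronger than you state. At any crossing $q$, apply \eqref{equdef} at $q$ to whichever of $D$, $D_{-\epsilon(q)}$ has $q$ warping for the given sequence. The splice terms still cancel, so $\alpha_n(D,\bm{a};y)-\alpha_n(D,\bm{b};y)$ just changes sign under a change at \emph{any} crossing. Hence you may assume $D=D^{\bm{a}}$.

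Then the $\bm{a}$-side has no splice terms at all. On the other side, switching the (even number $2k$ of) mixed crossings between components $i$ and $i+1$ one at a time gives
\[
\alpha_n(D^{\bm{a}},\bm{b};y)=\alpha_n(D^{\bm{b}},\bm{b};y)+\sum_{j=1}^{2k}(-1)^{j-1}S_j .
\]
Here $S_j$ is the sum of the two splice terms at the $j$-th switched crossing. Both are of the form $\alpha_{n-2}(\,\cdot\,;y)$, since each splice merges the two components.

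So what must be proved is $\sum_j(-1)^{j-1}S_j=0$. This is a cancellation among splice diagrams taken at \emph{different} crossings of a single expansion, and there is nothing on the $\bm{a}$-side to pair them with. Choosing the order of the crossing changes does not produce this cancellation.

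Separately, the reason you give for $w(D^{\bm{a}})=w(D^{\bm{b}})$ is wrong, since a crossing change flips the sign of that crossing. The writhes do agree, but because in each layered diagram the mixed crossings of components $i$ and $i+1$ have sign sum $2\,\mathrm{lk}=0$.

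\textbf{The direction-reversal case} breaks down in the same way. Reversing the direction on component $i$ with $a_i$ fixed reverses the order of the two visits at \emph{every} self-crossing of that component. So after your reduction, every self-crossing of component $i$ is warping for exactly one of the two data, not just a single crossing $p$. The binomial identity of Lemma~\ref{basepoints} handles exactly one crossing changing status (a base point moved past one crossing), so it does not apply. You again face an unexplained alternating sum of splice terms. (The paper does not isolate reversal; it treats any two sequences with $a_i,a'_i$ on the same component as connected and appeals to Lemma~\ref{basepoints}.)

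\textbf{What is missing} in both cases is a topological input rather than finer bookkeeping. The paper avoids the term-by-term comparison by inducting on the number of components $r$:
\begin{itemize}
\item For $r=1$, it uses Lemma~\ref{basepoints}.
\item For $r\ge 2$ and $D$ monotone, it uses the Reidemeister invariance of Lemma~\ref{Reidemeister}, already available at crossing number $m$, to move $D$ to a split diagram $O\sqcup D'$. It then compares the closed formulas and applies the induction hypothesis on $r$ to $D'$.
\item The case $s>0$ is reduced to the monotone case by the skein relation, which is essentially your crossing-change reduction.
\end{itemize}
Without something of this kind---Reidemeister moves at level $m$ together with splitting off a component---your argument does not close.
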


\begin{proof}
If the number of components is $r=1$, we have $\bm{a}=(a_1)$, and any sequence of base points is connected to $\bm{a}$. Thus, the result holds by Lemma~\ref{basepoints}.

Next, assume that $r \ge 2$, and suppose that the result holds for diagrams with $r-1$ components.
We proceed by induction on $s = d_{\bm{a}}(D)$. If $s=0$, meaning $D$ is a monotone diagram, we can transform $D$ into $O+D'$ by Reidemeister moves of types~I, \II, and \III, where $D'$ is a monotone diagram. By Lemma~\ref{Reidemeister} and the induction hypothesis on $m$, we have
\begin{align*}
\alpha_n(D,\bm{a};y) &= y^{w(D)}(-1)^n \binom{r-1}{n} (y+y^{-1})^{r-n-1},\\
\alpha_n(D',\bm{a};y) &= y^{w(D')}(-1)^n \binom{r-2}{n} (y+y^{-1})^{r-n-2}.
\end{align*}
Hence, we obtain
\[
\alpha_n(D, \bm{a}; y) = y^{w(D)-w(D')} (y+y^{-1})\frac{r-1}{r-n-1} \alpha_n(D',\bm{a}';y).
\]
By the induction hypothesis on $r$, we see that $\alpha_n(D',\bm{a}'; y)$ does not depend on $\bm{a}'$. Thus, $\alpha_n(D, \bm{a}; y)$ does not depend on $\bm{a}$.

For the case $s>0$, by repeatedly applying the skein relation,
we can reduce $\alpha_n(D,\bm{a};y)$ to diagrams with fewer crossings,
and ultimately to the monotone case $s=0$.
Hence the result follows from the monotone case.
\end{proof}

Next, we show the following.

\begin{lemma}\label{propertyofalpha}
We can compute $\alpha_n (D; y)$ $(n=0, \pm 1, \dots)$ using (1)--(3) in Theorem~\ref{maintheorem1}. In particular, $\alpha_n(D; y) = 0$ except for finitely many $n$, and we have $\alpha_n(D; y) = 0$ for $n < 0$.
\end{lemma}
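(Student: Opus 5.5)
We argue by double induction on the complexity $cd(D)=(c(D),d_{\bm a}(D))$, ordered lexicographically, and show that properties (1)--(3) of Theorem~\ref{maintheorem1} determine every $\alpha_n(D;y)$. At the same time we track the finiteness and vanishing statements. The induction hypothesis is that, for every diagram of smaller complexity, $\alpha_n$ is determined by (1)--(3), vanishes for $n<0$, and vanishes for all but finitely many $n$.

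\textbf{Base case: monotone diagrams and connected sums of monotone diagrams.} Let $D$ be monotone with respect to $\bm a$, or a connected sum of monotone diagrams. By Reidemeister moves of types I, \II, \III{} we bring $D$ to a crossingless diagram with $r$ components, namely $O\sqcup\cdots\sqcup O$. This is the standard fact that a descending diagram unknots; we use it componentwise, stacking the components by the order of the base points, exactly as in the $s=0$ step of the preceding lemma. Property (1) fixes the writhe factor $y^{w(D)}$ along the way. It then suffices to show that (1)--(3) force
\[
\alpha_n(O^{\sqcup r};y)=(-1)^n\binom{r-1}{n}(y+y^{-1})^{r-n-1}.
\]
For this, apply (3) to the one-crossing diagram obtained from $O^{\sqcup (r-1)}$ by a type I kink on one component. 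Then $D_\pm$ are type I kinks, whose values by (1) are $y^{\pm1}\alpha_n(O^{\sqcup(r-1)})$. One of the splices is $O^{\sqcup(r-1)}$ itself and the other is $O^{\sqcup r}$, with $\Delta$ equal to $0$ and $1$ respectively. Relation (3) therefore reads
\[
(y+y^{-1})\,\alpha_n(O^{\sqcup(r-1)})=\alpha_{n-1}(O^{\sqcup(r-1)})+\alpha_n(O^{\sqcup r}).
\]
This is a recursion in $r$ that determines $\alpha_n(O^{\sqcup r})$ from (2), and it is solved by the binomial formula via Pascal's rule. In particular, these values vanish for $n<0$ and for $n>r-1$.

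\textbf{Inductive step: a warping crossing.} If $d_{\bm a}(D)>0$, choose a warping crossing $p$ and apply (3) at $p$. This expresses $\alpha_n(D_{\epsilon(p)})$ in terms of three quantities:
\begin{itemize}
\item $\alpha_n(D_{-\epsilon(p)})$, which has the same crossing number and smaller warping degree;
\item $\alpha_{n+\Delta(p1)-1}(D_{p1})$ and $\alpha_{n+\Delta(p2)-1}(D_{p2})$, whose diagrams have fewer crossings.
\end{itemize}
By induction all three are determined, so $\alpha_n(D)$ is determined. Finiteness of the support is preserved, since the index shifts are bounded.

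\textbf{Main obstacle: vanishing for $n<0$.} Since $\Delta(pi)\in\{-1,0,1\}$, the shifted index $n+\Delta(pi)-1$ can be smaller than $n$, so the inductive step alone does not give vanishing for $n<0$. I would strengthen the induction hypothesis to the following refined vanishing statement:
\[
\alpha_k(D)=0 \quad\text{for } k<0, \qquad \alpha_k(D)=0 \quad\text{for } k>c(D)+r(D)-1.
\]
The lower vanishing is clear whenever $\Delta(pi)\ge 0$, because then $n+\Delta(pi)-1<n<0$. The delicate case is $\Delta(pi)=-1$, which occurs when $\delta(p)=1$ and the splice merges two components; the shifted index is then $n-2$. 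I would first try to handle this case by passing to the normalized series $L_D=z^{1-r(D)}\sum_n\alpha_n z^n$. The exponent $1-r(D)$ absorbs the component shift, so (3) becomes the unshifted Kauffman relation
\[
L_{D_+}+L_{D_-}=z\,(L_{D_\infty}+L_{D_0}).
\]
The claim is then that $z^{r(D)-1}L_D$ is a genuine power series, i.e.\ the pole order of $L_D$ is at most $r(D)-1$. This follows inductively: each splice contributes an extra factor of $z$, which compensates for the one-unit drop in $r(D)$ when a splice merges two components. Concretely, $z\cdot z^{-(r-2)}=z^{-(r-1)}\cdot z^{\,\cdot}$ with nonnegative exponent, so the pole order never exceeds $r(D)-1$. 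Checking that this bookkeeping is consistent in every case is where I expect the real work to lie.
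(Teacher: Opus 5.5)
Your overall scheme is the paper's: the kink recursion for $O^{\sqcup r}$, then reduction of monotone diagrams to $O^{\sqcup r}$ via Reidemeister moves and (1), then induction on $cd(D)$ at a warping crossing. The determination and finite-support parts go through. The flaw is your ``main obstacle''. As written, you leave the vanishing for $n<0$ unproved and defer it to bookkeeping you have not checked. The underlying reason is that you have the direction of the index shift backwards.

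Since, as you note, $\Delta(pi)\in\{-1,0,1\}$, we have $n+\Delta(pi)-1\le n$. So for $n<0$ all three indices on the right of \eqref{equ3}, namely $n$, $n+\Delta(p1)-1$ and $n+\Delta(p2)-1$, are negative, and your induction hypothesis kills every term at once. The case $\Delta(pi)=-1$ (index $n-2$) is the easiest case, not the delicate one: a shift that lowers the index can never spoil vanishing below $0$. Also, for $\Delta(pi)=1$ the shifted index equals $n$ rather than being $<n$; this is harmless.

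The detour through $L_D$ adds nothing. ``Pole order of $L_D$ at most $r(D)-1$'' is just a restatement of ``$\alpha_n(D)=0$ for $n<0$''. Your account of why it propagates is also inverted. The factor $z$ is needed when a splice \emph{splits} a component ($\Delta(pi)=1$, so $L_{D_{pi}}$ may have a pole of order $r(D)$), while a merging splice only lowers the pole order. Moreover, if $L_D$ is summed over $n\ge 0$ as in the paper, its skein relation in Proposition~\ref{prop:Lskein} matches \eqref{equ3} only once the negative-index coefficients are known to vanish. Invoking it here would therefore be circular.

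For comparison, the paper proves this vanishing by a separate induction on $c(D)$. For $n<0$, relation \eqref{equ3} at any crossing collapses to $\alpha_n(D_+)=-\alpha_n(D_-)$, and crossing changes carry $D$ to a monotone diagram, where $\alpha_n=0$. Folding the statement into your $cd$-induction, with the shift read correctly, works equally well and is slightly shorter.

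Your explicit bound $\alpha_k(D)=0$ for $k>c(D)+r(D)-1$ is correct. At a warping crossing, $c$ drops by one, $r$ changes by $\Delta(pi)$, and the index shifts by $\Delta(pi)-1$, so the inequality propagates exactly. This sharpens the paper's ``except for finitely many $n$''.
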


\begin{proof}
First, we show that for an $r$-component zero-crossing trivial link diagram $O^r$,  
\begin{equation}\label{equtrivial1}
\alpha_n (O^r;y)=
\begin{cases}
 (-1)^n \binom{r-1}{n} (y+y^{-1})^{r-n-1}, & \text{if $n \geq 0$,}\\
 0, & \text{if $n < 0$.}
\end{cases}
\end{equation}
Let $(D_+, D_-, D_{p1}, D_{p2})$ satisfy $c(D_+)=c(D_-)=1$. If $\delta(p)=1$, there exists another crossing, so $c(D_+)=c(D_-) \geq 2$. Thus, we have $\delta(p)=0$. Moreover, either $\Delta(p1)=1$ and $\Delta(p2)=0$, or $\Delta(p1)=0$ and $\Delta(p2)=1$. Using \eqref{equ1} and \eqref{equ3}, we obtain
\[
(y+y^{-1})\alpha_n (O^r; y) = \alpha_n(O^{r+1}; y) + \alpha_{n-1} (O^r;y).
\]
We can compute $\alpha_n(O^{r+1}; y)$ inductively using this equation. If $r=1$, by \eqref{equ2}, we have
\begin{align*}
\alpha_n(O^2; y) &= (y+y^{-1})\alpha_n(O; y) - \alpha_{n-1}(O; y)\\
 &= (y+y^{-1})\delta_{n,0} - \delta_{n-1,0}.
\end{align*}
This satisfies \eqref{equtrivial1}. Next, assume that \eqref{equtrivial1} holds for $O^r$, and consider the case of $O^{r+1}$. If $n \geq 0$, we have
\begin{align*}
\alpha_n(O^{r+1}; y) &= (-1)^n \binom{r-1}{n} (y+y^{-1})^{r-n} - (-1)^{n-1} \binom{r-1}{n-1}(y+y^{-1})^{r-(n-1)-1}\\
 &= (-1)^n \binom{r}{n}(y+y^{-1})^{r-n}.
\end{align*}
On the other hand, if $n < 0$, we have $\alpha_n (O^r; y)=0$ and $\alpha_{n-1} (O^r;y)=0$, and hence
\[
\alpha_n(O^{r+1}; y)=0.
\]
Thus, we see that equation \eqref{equtrivial1} holds for the zero-crossing trivial diagram $O^{r+1}$. Consequently, \eqref{equtrivial1} holds for any $n$ and $r$. From this and (1) in Theorem~\ref{maintheorem1}, if $D$ is monotone, then we have
\begin{equation}\label{equmonotone}
\alpha_n (D;y)=
\begin{cases}
 (-1)^n y^{w(D)} \binom{r-1}{n} (y+y^{-1})^{r-n-1}, & \text{if $n \geq 0$,}\\
 0, & \text{if $n < 0$,}
\end{cases}
\end{equation}
where $w(D)$ is the writhe.

Next, we show that $\alpha_n(D;y)$ can be computed for any diagram $D$ with $cd(D)=(k,s)$, assuming that $\alpha_n(D';y)$ can be computed for any diagram $D'$ with $cd(D')<(k,s)$. If $s=0$, $D$ is a monotone diagram, and $\alpha_n(D;y)$ satisfies equation~\eqref{equmonotone}. Therefore, we consider the case where $s>0$. We choose a sequence of base points $\bm{a}=(a_1,\dots,a_i,\dots, a_r)$ with $d_{\bm{a}}(D)=s$ and a warping crossing point $p$. Then there exist integers $s_1, s_2 \geq 0$ such that
\begin{align}\label{equcd}
cd(D_{-\epsilon(p)}) &\leq (k,s-1) < (k,s), \nonumber\\
cd(D_{p1}) &\leq (k-1, s_1) < (k,s), \\
cd(D_{p2}) &\leq (k-1, s_2) < (k,s). \nonumber
\end{align}
By the induction hypothesis, we can compute $\alpha_n (D_{-\epsilon(p)}; y)$, $\alpha_{n+\Delta(p1)-1}(D_{p1};y)$, and $\alpha_{n+\Delta(p2)-1}(D_{p2};y)$. Thus, we can calculate $\alpha_n(D;y)$ using \eqref{equ3}. 

Next, we show that $\alpha_n(D; y)=0$ for $n<0$. Assume that $\alpha_n(D'; y)=0$ for $n<0$ for any diagram $D'$ with $c(D')<k$. Let $D$ be a diagram with $c(D)=k$. For any crossing point $p$, we see that $c(D_{p1})<k$, $c(D_{p2})<k$, $n+\Delta(p1)-1<0$, and $n+\Delta(p2)-1<0$. Thus, by \eqref{equ3}, we have
\[
\alpha_n(D_+;y) + \alpha_n (D_-; y) = 0,
\]
where $D=D_+$ or $D=D_-$. By applying crossing changes repeatedly, the diagram $D$ can be transformed into a monotone diagram. Therefore, we conclude that $\alpha_n(D; y)=0$ for $n<0$.

Finally, using \eqref{equ3} and \eqref{equcd}, and by induction on $cd(D)=(k,s)$, we see that $\alpha_n(D;y)=0$ except for finitely many $n$.
\end{proof}



From Theorem~\ref{maintheorem1}, the generating series $L$ associated
with the coefficient polynomials $\alpha_n$ and its writhe-normalized
version $F$ satisfy the following properties.

\begin{prop}\label{prop:Lskein}
If $D$ and $D'$ are regularly isotopic diagrams, then $L_D=L_{D'}$. Moreover, the following identities hold for all skein quadruples of diagrams that are identical everywhere except in the local pictures indicated below:
\begin{align*}
& L_{D_+} + L_{D_-} = z( L_{D_{\infty}} + L_{D_0}), \\
&L_{\includegraphics{RIpos.pdf}} = yL_{\includegraphics{RIm.pdf}},\\
&L_{\includegraphics{RIneg.pdf}}= y^{-1}L_{\includegraphics{RIm.pdf}}, \\
&L_{\bigcirc}=1.
\end{align*}
\end{prop}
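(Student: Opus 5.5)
We derive each identity for $L$ directly from the corresponding property of the coefficient polynomials in Theorem~\ref{maintheorem1}, comparing the series $L_D=z^{1-r(D)}\sum_{n\ge0}\alpha_n(D;y)z^n$ coefficient by coefficient. By Lemma~\ref{propertyofalpha}, $\alpha_n(D;y)=0$ for $n<0$. Hence we may sum over all $n\in\mathbb Z$ and reindex freely. Regular isotopy consists of Reidemeister moves of types \II\ and \III, which preserve $r(D)$. Property (1) gives $\alpha_n(D;y)=\alpha_n(D';y)$ for every $n$, so $L_D=L_{D'}$. The move of type I also preserves $r(D)$, and multiplies every $\alpha_n$ by $y^{\pm1}$ by \eqref{equ1}. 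This gives $L_{\RIpos}=yL_{\RIm}$ and $L_{\RIneg}=y^{-1}L_{\RIm}$. Finally, $r(O)=1$ and \eqref{equ2} give $L_{\bigcirc}=z^0\sum_n\delta_{n,0}z^n=1$.

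The main step is the skein identity. Write $r=r(D_+)=r(D_-)$ and $r_i=r(D_{pi})=r+\Delta(pi)$. We multiply \eqref{equ3} by $z^{\,n+1-r}$ and sum over $n$. The left side becomes $L_{D_+}+L_{D_-}$. For the splice term with $i=1,2$, substitute $m=n+\Delta(pi)-1$. Then
\[
\sum_n \alpha_{n+\Delta(pi)-1}(D_{pi};y)\,z^{\,n+1-r}
=\sum_m \alpha_m(D_{pi};y)\,z^{\,m-\Delta(pi)+2-r}
= z\cdot z^{\,1-r_i}\sum_m\alpha_m(D_{pi};y)z^m = zL_{D_{pi}}.
\]
Since $D_{p1}=D_\infty$ and $D_{p2}=D_0$, this yields $L_{D_+}+L_{D_-}=z(L_{D_\infty}+L_{D_0})$.

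The only point needing care is the shift bookkeeping, which we address in two ways. First, the exponent $1-r_i$ in $L_{D_{pi}}$ must exactly absorb the index shift $\Delta(pi)-1$ of \eqref{equ3}. This is precisely why that shift was built into the coefficient-level relation. Second, the sums must be legitimate in $R[[z]]$, or in Laurent series when $r>1$ produces a finite negative power of $z$. Both sides have only finitely many nonzero coefficients by Lemma~\ref{propertyofalpha}, so every sum is finite and the manipulation is purely formal. Therefore no convergence issue arises, and the vanishing for negative indices ensures that reindexing introduces no spurious terms.
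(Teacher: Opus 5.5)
Your proposal is correct and takes essentially the same route as the paper, which states Proposition~\ref{prop:Lskein} as an immediate consequence of Theorem~\ref{maintheorem1} by passing properties (1)--(3) through the generating series coefficientwise. Your explicit reindexing $m=n+\Delta(pi)-1$ and your use of $\alpha_n(D;y)=0$ for $n<0$ from Lemma~\ref{propertyofalpha} make precise the bookkeeping the paper leaves implicit, as does your observation that $L_D$ lives in Laurent series when $r(D)>1$.
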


\begin{prop}\label{prop:Fambient}
The Laurent polynomial $F_D(y,z)\in R[z,z^{-1}]$ is an ambient isotopy invariant of oriented links.
\end{prop}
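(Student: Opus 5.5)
The plan is to derive Proposition~\ref{prop:Fambient} from Proposition~\ref{prop:Lskein} together with Lemma~\ref{propertyofalpha}. There are two parts: first that $F_D$ is a Laurent polynomial in $z$ with coefficients in $R$, and then that it is invariant under all three oriented Reidemeister moves.

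\emph{Laurent polynomiality.} By Lemma~\ref{propertyofalpha}, $\alpha_n(D;y)=0$ for $n<0$ and for all but finitely many $n$. Hence $\sum_{n\ge0}\alpha_n(D;y)z^n$ is a polynomial in $z$. Multiplying by $z^{1-r(D)}$ gives an element of $R[z,z^{-1}]$, and multiplying further by the unit $y^{-w(D)}\in R$ keeps it there. The sum is finite, so no convergence issue arises.

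\emph{Invariance.} Ambient isotopy of oriented links is generated by planar isotopy and the oriented Reidemeister moves I, II and III. I will orient $D$ and view $D'$ as the diagram obtained by one such move, with the induced orientation.

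For moves II and III, Proposition~\ref{prop:Lskein} (regular isotopy invariance) gives $L_D=L_{D'}$. These moves preserve crossing signs of corresponding crossings: a II move adds or removes a pair of crossings of opposite sign, and a III move preserves the three signs. Hence $w(D)=w(D')$ and $F_D=F_{D'}$.

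For move I, suppose $D$ contains a positive kink that is removed to get $D'$. The local relation $L_{\RIpos}=yL_{\RIm}$ of Proposition~\ref{prop:Lskein} gives $L_D=yL_{D'}$. On the other hand, $w(D)=w(D')+1$, so
\[
F_D=y^{-w(D')-1}\,y\,L_{D'}=F_{D'}.
\]
The negative kink is identical, with $y^{-1}$ and $w(D)=w(D')-1$.

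The one point needing care is that the sign of the kink crossing is independent of the orientation chosen on the strand. Reversing the orientation of both strands at a crossing preserves its sign, and the two strands of a kink belong to the same component. So the geometric type $\RIpos$ versus $\RIneg$ in \eqref{equ1} agrees with the writhe contribution $\pm1$ for either orientation. I expect this matching of the unoriented kink pictures with crossing signs to be the main (if minor) obstacle; it is the step where the conventions of Figure~\ref{fig:splices} and \eqref{equ1} have to be read consistently.

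Finally, $L_D$ depends only on the unoriented diagram, so $F_D$ depends on the orientation only through $w(D)$. This completes the argument.
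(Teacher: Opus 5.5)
Your proposal is correct and matches the argument the paper gives for this fact, which appears in the proof of Proposition~\ref{prop:uniq_ambient}: $L_D$ is invariant under regular isotopy, moves II and III preserve the writhe, and under move I the kink factor $y^{\pm1}$ cancels against the change in $y^{-w(D)}$. Your check that $F_D$ actually lies in $R[z,z^{-1}]$, using the finiteness in Lemma~\ref{propertyofalpha}, and your remark that the sign of a kink crossing does not depend on the orientation, are small details the paper leaves implicit.
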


Lemma~\ref{lem:connected} verifies that the polynomial $L$ behaves multiplicatively under connected sum and satisfies the expected formula for disjoint union.

\begin{lemma}\label{lem:connected}
The following formulas hold for the polynomial $L$ with respect to connected sum and disjoint union:
\[
L_{D \# D'} = L_D L_{D'}, \quad L_{D\sqcup D'} = d L_D L_{D'},
\]
where $d = z^{-1}(y+y^{-1})-1$.
\end{lemma}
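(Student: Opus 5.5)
We prove both formulas by induction on the complexity $cd$, using the skein relation of Proposition~\ref{prop:Lskein} together with the uniqueness/computability of Lemma~\ref{propertyofalpha}.

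\textbf{Connected sum.} Fix $D'$. Define, for every diagram $E$, the series $\Phi(E)=L_{E\# D'}$, where the connected sum is taken at a fixed arc of $E$ away from all crossings. We show that $\Phi(E)=L_E L_{D'}$ by induction on $cd(E)$, choosing base points on $E$ so that the summing arc lies just after the base point of its component.

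\emph{Base case.} If $E$ is monotone, then $E\# D'$ is regularly isotopic to $O\# D'=D'$ after type~I moves. One way to see this is to slide the monotone part of $E$ off by Reidemeister moves, as in the proof of the base point lemma. This gives $\Phi(E)=y^{w(E)}\cdot(\text{value for } O^{r})\cdot L_{D'}$, and by \eqref{equmonotone} that value is $L_E L_{D'}$. More precisely, we peel off the unlinked trivial components one at a time. Each split component contributes the same factor $d$ on both sides, so this step is handled together with the disjoint-union formula below. This is why we prove the two formulas simultaneously.

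\emph{Inductive step.} If $E$ has a warping crossing $p$, the four skein diagrams of $E\# D'$ at $p$ are exactly $E_\pm\# D'$ and $E_{\infty}\# D'$, $E_0\# D'$. Hence both $\Phi$ and $E\mapsto L_E L_{D'}$ satisfy
\[
X(E_+)+X(E_-)=z\bigl(X(E_\infty)+X(E_0)\bigr).
\]
All of $E_{-\epsilon(p)}$, $E_\infty$, $E_0$ have smaller $cd$, so the induction closes.

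The main obstacle is to check that the sum is well defined, that is, independent of the choice of the summing arc. This follows a posteriori, since the right-hand side $L_EL_{D'}$ does not depend on it. Alternatively, one can argue directly: $L$ is a regular isotopy invariant, and moving the arc through a crossing of $E$ is achieved by sliding $D'$ along, which is a regular isotopy.

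\textbf{Disjoint union.} We argue the same way with $\Psi(E)=L_{E\sqcup D'}$, placing $D'$ far away from $E$ in the plane. The skein relation at crossings of $E$ is again preserved.

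For the base case we need $L_{O\sqcup D'}=d\,L_{D'}$. To prove it, apply the skein relation to a kink crossing $p$ on an arc of $D'$, chosen so that one smoothing splits off a circle. The crossing and its change give $yL_{D'}$ and $y^{-1}L_{D'}$. One smoothing gives $D'$ itself, and the other gives $O\sqcup D'$. Thus
\[
(y+y^{-1})L_{D'}=z\bigl(L_{D'}+L_{O\sqcup D'}\bigr),
\]
which yields $L_{O\sqcup D'}=\bigl(z^{-1}(y+y^{-1})-1\bigr)L_{D'}=d\,L_{D'}$.

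Then, for monotone $E$ with $r$ components, regular isotopy and type~I moves give $E\sqcup D'\sim y^{w(E)}O^{r}\sqcup D'$. Hence
\[
\Psi(E)=y^{w(E)}d^{\,r}L_{D'}.
\]
On the other hand, $L_E=y^{w(E)}d^{\,r-1}$, which follows from \eqref{equtrivial1} after checking that
\[
z^{1-r}\sum_n(-1)^n\binom{r-1}{n}(y+y^{-1})^{r-1-n}z^n=d^{\,r-1}.
\]
This is the binomial theorem, and it gives $\Psi(E)=d\,L_EL_{D'}$. The inductive step is identical to the connected-sum case, which completes the proof.
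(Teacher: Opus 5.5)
Your proof is correct. Its inductive step is the same as the paper's: you expand at a warping crossing $p$ of one factor, where the skein quadruple of $E\# D'$ at $p$ is $E_\pm\# D'$, $E_\infty\# D'$, $E_0\# D'$, and likewise for $\sqcup$. The difference lies entirely in the base case.

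The paper takes \emph{both} factors monotone (or connected sums of monotone diagrams) and inserts the closed formula of Definition~\ref{defofalpha}. It then checks the two identities coefficientwise, using Vandermonde's convolution $\sum_k\binom{r-1}{k}\binom{r'-1}{n-k}=\binom{r+r'-2}{n}$ for $\#$ and Pascal's rule for $\sqcup$.

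You instead keep $D'$ arbitrary. A single kink plus the skein relation gives $L_{O\sqcup X}=d\,L_X$ for every $X$. A monotone $E$ is a layered stack of descending components, so $E\# D'$ and $E\sqcup D'$ are related by Reidemeister moves to $O^{r-1}\sqcup D'$ and $O^{r}\sqcup D'$, with net kink factor $y^{w(E)}$. Together with $L_E=y^{w(E)}d^{\,r-1}$, this settles the base case.

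Your route buys two things. First, a one-sided induction suffices: the paper inducts only on $cd(D)$, although its base case also requires $D'$ monotone, so it tacitly needs a second induction on $D'$. Second, it gives a conceptual reading of the paper's binomial manipulations, which are just $d^{r-1}d^{r'-1}=d^{r+r'-2}$ and $d\cdot d^{r+r'-2}=d^{r+r'-1}$.

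It costs topological input that the paper's base case avoids:
\begin{itemize}
\item Reidemeister's theorem.
\item The split structure of monotone diagrams. For $r\ge 2$ the target is $O^{r-1}\sqcup D'$, not $D'$ as the first sentence of your base case says before you correct it.
\item The writhe bookkeeping $w(E\# D')=w(E)+w(D')$. Here $y^{w(E)}$ is independent of orientation because all linking numbers in a monotone diagram vanish.
\end{itemize}

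Finally, no simultaneous induction is needed. The identity $L_{O\sqcup X}=d\,L_X$ requires no induction at all, so proving it first makes the two formulas independent.
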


\begin{proof}
First, we consider the case where $D$ and $D'$ are monotone diagrams or connected sums of monotone diagrams with $r$ and $r'$ components, respectively. Then we have
\begin{align*}
L_D &= z^{1-r} \sum_{n=0}^\infty \alpha_n(D;y) z^n \\
    &= z^{1-r} \sum_{n=0}^{\infty} y^{w(D)} (-1)^{n} \binom{r-1}{n} (y + y^{-1})^{r-n-1} z^{n}, \\
L_{D'} &= z^{1-r'} \sum_{n=0}^\infty \alpha_n(D';y) z^n \\
       &= z^{1-r'} \sum_{n=0}^{\infty} y^{w(D')} (-1)^{n} \binom{r'-1}{n} (y + y^{-1})^{r'-n-1} z^{n}.
\end{align*}
Hence, we obtain
\begin{align*}
L_D L_{D'} &= z^{2-r-r'} \sum_{n=0}^\infty \left( \sum_{k=0}^n \alpha_k(D;y)\alpha_{n-k}(D';y) \right) z^n \\
&= z^{2-r-r'} \sum_{n=0}^{\infty} \Biggl( \sum_{k=0}^{n} y^{w(D)} (-1)^{k} \binom{r-1}{k} (y + y^{-1})^{r-k-1} \\
&\qquad \cdot y^{w(D')} (-1)^{n-k} \binom{r'-1}{n-k} (y + y^{-1})^{r'-(n-k)-1} \Biggr) z^{n} \\
&= z^{2-r-r'} \sum_{n=0}^{\infty} y^{w(D) + w(D')} (-1)^{n} (y + y^{-1})^{r+r'-n-2} \\
&\qquad \cdot\left( \sum_{k=0}^{n} \binom{r-1}{k} \binom{r'-1}{n-k} \right) z^{n} \\
&= z^{2-r-r'} \sum_{n=0}^{\infty} y^{w(D \# D')} (-1)^{n} \binom{r+r'-2}{n} (y + y^{-1})^{r+r'-n-2} z^{n} \\
&= L_{D \# D'},
\end{align*}
by Definition~\ref{defofalpha}.

Next, we assume that $L_{D \# D'} = L_D L_{D'}$ whenever $cd(D) < (k,s)$. Let $p$ be a warping crossing point of $D$. From Proposition~\ref{prop:Lskein}, we have
\[
L_{D \# D'} = -L_{D_{-\epsilon(p)} \# D'} + z L_{D_\infty \# D'} + z L_{D_0 \# D'}.
\]
Since $cd(D_{-\epsilon(p)}) < (k,s)$, $cd(D_\infty) < (k,s)$, and $cd(D_0) < (k,s)$, by the induction hypothesis, we have
\begin{align*}
L_{D \# D'} &= (-L_{D_{-\epsilon(p)}} + z L_{D_\infty} + z L_{D_0}) L_{D'} \\
&= L_D L_{D'}.
\end{align*}

We next show $L_{D \sqcup D'} = d L_D L_{D'}$ when $D$ and $D'$ are monotone diagrams or connected sums of monotone diagrams with $r$ and $r'$ components, respectively. Using the computation above, we obtain
\begin{align*}
&d L_D L_{D'} \\
&= z^{2-r-r'} \left( \sum_{n=0}^{\infty} y^{w(D)+w(D')} (-1)^{n} (y + y^{-1})^{r+r'-n-2} \binom{r+r'-2}{n} z^{n} \right) \\
&\qquad \cdot \left( z^{-1}(y+y^{-1}) - 1 \right) \\
&= z^{1-r-r'} \left( \sum_{n=0}^\infty y^{w(D)+w(D')} (-1)^n (y+y^{-1})^{r+r'-n-1} \binom{r+r'-2}{n} z^n \right) \\
&\quad + z^{1-r-r'} \left( \sum_{n=0}^\infty y^{w(D)+w(D')} (-1)^{n+1} (y+y^{-1})^{r+r'-n-2} \binom{r+r'-2}{n} z^{n+1} \right) \\
&= z^{1-r-r'} \Biggl( \sum_{n=1}^\infty y^{w(D)+w(D')} (-1)^n (y+y^{-1})^{r+r'-n-1} \binom{r+r'-2}{n} z^n \\
&\quad + y^{w(D)+w(D')} (y+y^{-1})^{r+r'-1} \binom{r+r'-2}{0} z^0 \Biggr) \\
&\quad + z^{1-r-r'} \left( \sum_{n=1}^\infty y^{w(D)+w(D')} (-1)^{n} (y+y^{-1})^{r+r'-n-1} \binom{r+r'-2}{n-1} z^{n} \right) \\
&= z^{1-r-r'} \Biggl( \sum_{n=1}^\infty y^{w(D)+w(D')} (-1)^n (y+y^{-1})^{r+r'-n-1} \binom{r+r'-1}{n}  z^n \\
&\qquad + y^{w(D)+w(D')} (y+y^{-1})^{r+r'-1} \Biggr) \\
&= z^{1-r-r'} \sum_{n=0}^\infty y^{w(D)+w(D')} (-1)^n (y+y^{-1})^{r+r'-n-1} \binom{r+r'-1}{n} z^n \\
&= L_{D \sqcup D'}.
\end{align*}
The general case of the disjoint union formula is proved by the same induction on $cd(D)$ as in the connected-sum case above.
\end{proof}

\section{Uniqueness}\label{sec:uniqueness}
\subsection{Uniqueness under regular isotopy}\label{subsec:uniq_regular}
The uniqueness for three-term skein relations is well known
(see, e.g., \cite{ItoYoshidaNakagane}).
In contrast, the case of four-term skein relations,
especially for invariants under regular isotopy,
requires a more careful analysis.
For ambient isotopy invariants, the argument proceeds in parallel with the classical case.
On the other hand, for regular isotopy invariants, the behavior under Reidemeister moves of type~I
introduces additional complications, and the inductive argument must be handled more carefully.
In this section, we study the uniqueness in the four-term skein setting,
both for regular and ambient isotopy.
While our argument follows the general strategy of the three-term case,
it incorporates essential modifications reflecting the four-term structure.
As a consequence, we show that the construction developed above,
in particular the coefficient polynomials $\alpha_n$,
directly leads to a reconstruction of the Kauffman polynomial.
\begin{lemma}\label{lem:uniq_regular}
Let $R$ be a commutative ring containing $\mathbb{Z}[y^{\pm1}]$
and in which $a,b,c,d\in R$ are invertible.
Let $f$ and $f'$ be two link invariants with values in $R$
satisfying the same four-term skein relation
\begin{equation}\label{eq:4term_general}
a\,f(L_+) + b\,f(L_-) + c\,f(L_\infty) + d\,f(L_0)=0
\end{equation}
for every skein quadruple $(L_+,L_-,L_\infty,L_0)$,
together with the normalization
\begin{equation}\label{eq:trivial_knot_agree}
f(O)=f'(O)
\end{equation}
for the zero-crossing trivial knot diagram $O$,
and the kink relations
\begin{equation}\label{eq:kink_relations_general}
f(\RIpos)=y\,f(\RIm), \qquad
f(\RIneg)=y^{-1}f(\RIm)
\end{equation}
(and the same for $f'$).
Then $f\equiv f'$ as invariants under regular isotopy.
\end{lemma}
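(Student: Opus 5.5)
We argue by induction on the complexity $cd(D)=(c(D),d_{\bm a}(D))$ in lexicographic order, exactly as in Lemma~\ref{propertyofalpha}. We show that for every diagram $D$ the value $f(D)$ is determined by the data \eqref{eq:4term_general}, \eqref{eq:trivial_knot_agree}, \eqref{eq:kink_relations_general}. Since $f'$ satisfies the same data, this gives $f(D)=f'(D)$ for all diagrams. Invariance of $f$ and $f'$ under moves of type~\II{} and~\III{} is part of the hypothesis that they are regular isotopy invariants, so both may be evaluated on any diagram representative.

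\emph{Base case: trivial diagrams.} We first show $f(O^r)=f'(O^r)$ for every $r$ by induction on $r$, with $r=1$ given by \eqref{eq:trivial_knot_agree}. Apply \eqref{eq:4term_general} to a one-crossing diagram: a kinked unknot next to $O^{r-1}$, obtained by a type~I kink on one component of $O^r$. Its two resolutions at the crossing are $O^{r}$ and $O^{r+1}$ (which is which depends on the auxiliary orientation). The kink relations express $f(L_+)$ and $f(L_-)$ as $y^{\pm1}f(O^r)$. Hence
\[
(a y^{\pm1}+b y^{\mp1})f(O^r)+c\,f(O^{?})+d\,f(O^{?})=0 ,
\]
in which exactly one term involves $O^{r+1}$, with coefficient $c$ or $d$. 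Because $c$ and $d$ are invertible, $f(O^{r+1})$ is determined by $f(O^r)$, and the same formula holds for $f'$.

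\emph{Base case: monotone diagrams.} A monotone diagram, or a connected sum of monotone diagrams, is regularly isotopic to a trivial diagram $O^r$ with kinks attached. This is the same reduction used in the base case of the base-point-independence lemma, now tracked up to regular isotopy. One first slides components apart by type~\II{} and~\III{} moves, then unwinds each component until only curls remain. Each curl contributes a factor $y^{\pm1}$ by \eqref{eq:kink_relations_general}, so $f(D)=y^{k}f(O^r)$ for an integer $k$ independent of $f$; in fact $k$ is determined by the writhe. Therefore $f(D)=f'(D)$ in this case.

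\emph{Inductive step.} Suppose $d_{\bm a}(D)=s>0$, and choose a warping crossing $p$. Solving \eqref{eq:4term_general} for the term belonging to $D=D_{\epsilon(p)}$, using that $a$ and $b$ are invertible, gives
\[
f(D)=-x^{-1}\bigl(x' f(D_{-\epsilon(p)})+c f(D_{\infty})+d f(D_0)\bigr),
\]
where $\{x,x'\}=\{a,b\}$. The three diagrams on the right have complexities smaller than $(k,s)$, by \eqref{equcd}. By induction, $f$ and $f'$ agree on each of them, so they agree on $D$.

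\emph{Main obstacle.} The main obstacle is the monotone base case under \emph{regular} rather than ambient isotopy. One must check that a descending diagram can be undone using only moves of type~\II{} and~\III{} together with the kink relations. I would do this by the standard argument: a descending knot diagram is regularly isotopic to an unknot with curls, because Whitney-degree obstructions are absorbed by curls. For links, split the descending components apart first. It is also where the invertibility of $c$ and $d$ (not only of $a$ and $b$) is genuinely needed, namely in the trivial-link recursion above.
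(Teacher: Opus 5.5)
Your proof is correct and is essentially the paper's argument: the unlink recursion through a one-crossing kinked diagram (where invertibility of $c,d$ is used), followed by crossing changes whose smoothings have fewer crossings (where invertibility of $a,b$ is used). The difference is packaging: you run this as the $cd(D)$-induction the paper uses in its direct sketch after Remark~\ref{rem:direct_alpha}, whereas the paper's proof of this lemma phrases it as a minimal counterexample along an arbitrary unknotting sequence of crossing changes. Also, your concern about Whitney degree in the monotone case is unnecessary: any Reidemeister sequence from a diagram of a trivial link to $O^r$ suffices, because each type~I move only contributes a factor $y^{\pm1}$ by the kink relations, which is exactly how the paper finishes Step~1.
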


\begin{proof}
Let $F=f-f'$. It suffices to prove $F(L)=0$ for every link $L$.

\medskip
\noindent\emph{Step 1: zero-crossing unlink diagrams.}
From \eqref{eq:trivial_knot_agree} we have $F(O)=0$.
Using \eqref{eq:4term_general} together with the kink relations
\eqref{eq:kink_relations_general},
one can express $F(O^{r+1})$ linearly in terms of $F(O^r)$.
(Concretely, apply the skein relation to a one-crossing diagram whose smoothings
are $O^r$ and $O^{r+1}$; since $c$ and $d$ are invertible in $R$,
one can solve for the value on one smoothing in terms of the other.)
Hence, by induction on $r\ge 1$, we obtain
\[
F(O^r)=0 \qquad \text{for all zero-crossing unlink diagrams}~O^r.
\]
If a diagram $D$ of a trivial link is connected to $O^r$ by a finite sequence of Reidemeister moves,
then the kink relations together with regular isotopy invariance imply $F(D)=0$.
Hence $F(L)=0$ for every trivial link $L$.    

\medskip
\noindent\emph{Step 2: minimal counterexample.}
Assume for contradiction that there exists a nontrivial link $L$ with $F(L)\ne 0$.
Choose such an $L$ with minimal crossing number $c(L)$, and fix a diagram $D$
realizing $c(D)=c(L)$.

It is standard that by changing crossings of $D$ finitely many times
one obtains a diagram of a trivial link.
Let
\[
D=D^{(0)},D^{(1)},\dots,D^{(r)}
\]
be a sequence of diagrams obtained from $D$ by successive crossing changes,
so that $D^{(r)}$ represents a trivial link.
Let $L^{(i)}$ be the link represented by $D^{(i)}$.
By construction,
\[
c(D^{(i)})=c(D)=c(L)\qquad\text{for all }i.
\]

Fix $i$ and apply the skein relation \eqref{eq:4term_general}
at the crossing that is changed from $D^{(i)}$ to $D^{(i+1)}$.
The two smoothing diagrams have strictly fewer crossings:
\[
c\big((D^{(i)})_\infty\big),\ c\big((D^{(i)})_0\big)\le c(D^{(i)})-1=c(L)-1.
\]
Therefore the corresponding links have crossing number $<c(L)$, and by the minimality of $c(L)$
we must have
\[
F\big((L^{(i)})_\infty\big)=F\big((L^{(i)})_0\big)=0.
\]
Since $a$ and $b$ are invertible in $R$, the skein relation reduces to an equivalence
\[
F(L^{(i)})=0 \iff F(L^{(i+1)})=0,
\]
and hence
\[
F(L^{(i)})\ne 0 \iff F(L^{(i+1)})\ne 0.
\]
Consequently $F(L^{(r)})\ne 0$.
But $L^{(r)}$ is a trivial link, so Step~1 gives $F(L^{(r)})=0$.
This contradiction proves $F\equiv 0$, hence $f\equiv f'$.
\end{proof}

\begin{remark}\label{rem:uniq_apply_to_L_and_alpha}
Lemma~\ref{lem:uniq_regular} applies to regular isotopy invariants
$L_D(y,z)\in R[[z]]$ (in particular $R=\mathbb{Z}[y^{\pm1}][[z]]$).
Moreover, coefficient extraction $[z^n]$ is an $R$-linear map on $R[[z]]$,
so the same uniqueness applies coefficientwise:
if $L_D=L'_D$, then $\alpha_n(D;y)=\alpha'_n(D;y)$ for all $n\ge 0$.
\end{remark}

\subsection{Uniqueness under ambient isotopy}\label{subsec:uniq_ambient}

\begin{prop}\label{prop:uniq_ambient}
Let $L_D(y,z)$ be a regular isotopy invariant taking values in $R[[z]]$
and satisfying the kink relations \eqref{eq:kink_relations_general}.
Define the writhe-normalized invariant
\[
F_D(y,z):=y^{-w(D)}\,L_D(y,z)
\]
for oriented diagrams $D$.
Then $F_D$ is invariant under ambient isotopy.
Moreover, if $L$ and $L'$ satisfy the same skein relation and the same normalization on the unknot,
then their writhe-normalizations coincide under ambient isotopy.
\end{prop}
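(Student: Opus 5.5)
The plan is to check invariance of $F_D$ under each Reidemeister move separately, and then deduce uniqueness from Lemma~\ref{lem:uniq_regular}. By Reidemeister's theorem, ambient isotopy of oriented links is generated by planar isotopy and the moves of types~I, \II, and~\III. So it suffices to show that $F_D=y^{-w(D)}L_D$ is unchanged by each move. Because $L_D$ is a regular isotopy invariant, it is unchanged by types~\II\ and~\III. The writhe is also unchanged by these moves: a type~\II\ move creates or removes two crossings of opposite sign, whatever the orientations of the two strands, and a type~\III\ move permutes three crossings while preserving each sign. Hence $F_D$ is invariant under types~\II\ and~\III.

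For a type~I move, the added kink contributes one crossing whose sign is determined by the kink itself. It does not depend on the orientation of the strand, since reversing the strand reverses both branches at the crossing. Compare a diagram $D$ with a positive kink to the diagram $D'$ with the kink removed. The kink relation \eqref{eq:kink_relations_general} gives $L_D=yL_{D'}$, and we have $w(D)=w(D')+1$. Therefore
\[
F_D=y^{-w(D')-1}\,y\,L_{D'}=F_{D'}.
\]
The negative kink case is the same with $y^{-1}$ and $w(D)=w(D')-1$. One point needs care: the kink labelled \RIpos\ in \eqref{eq:kink_relations_general} must be the one whose crossing has sign $+1$. I would state this convention explicitly and check it against the picture. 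If the labels were the other way round, one would normalize by $y^{+w(D)}$ instead, and nothing else in the argument would change. This settles ambient isotopy invariance.

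For the uniqueness statement, suppose $L$ and $L'$ satisfy the same skein relation, the same kink relations, and the same unknot normalization. Apply Lemma~\ref{lem:uniq_regular} with $f=L$ and $f'=L'$, taking the skein relation $L_{D_+}+L_{D_-}-zL_{D_\infty}-zL_{D_0}=0$. Here $a=b=1$ and $c=d=-z$. The lemma requires $c,d$ to be invertible, but $z$ is not a unit in $R[[z]]$. I would handle this by working in the Laurent series ring $R((z))$, where $z$ is invertible, and viewing $L$ and $L'$ as $R((z))$-valued. The proof of Lemma~\ref{lem:uniq_regular} uses only the invertibility of the coefficients and the kink relations, so it applies verbatim and gives $L_D=L'_D$ for every diagram. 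Multiplying both sides by $y^{-w(D)}$ gives $F_D=F'_D$, and by the first part this identity holds on ambient isotopy classes.

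I expect this invertibility point to be the main obstacle. The cleanest fix is the passage to $R((z))$. An alternative is to note that Step~1 of Lemma~\ref{lem:uniq_regular} can be carried out directly in $R[[z]]$, because it only needs to solve the skein relation for $F(O^{r+1})$. By the computation preceding \eqref{equtrivial1}, the coefficient of $F(O^{r+1})$ in that relation is $-z$, and $z$ is a non-zero-divisor in $R[[z]]$, so $zF(O^{r+1})=0$ still forces $F(O^{r+1})=0$. Step~2 only divides by $a$ and $b$, which equal $1$.
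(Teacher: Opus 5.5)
Your proof is correct and follows the same route as the paper. For moves of types~\II\ and~\III\ you use writhe preservation together with regular isotopy invariance of $L_D$; for type~I the factor $y^{\pm1}$ from the kink relation cancels the writhe change $\pm1$; and uniqueness comes from Lemma~\ref{lem:uniq_regular} followed by applying the common normalization $y^{-w(D)}$. Your observation that $c=d=-z$ are not units in $R[[z]]$, which you handle by passing to $R((z))$ or by noting that $z$ is a non-zero-divisor so Step~1 of the lemma still goes through, addresses a point the paper passes over silently.
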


\begin{proof}
Reidemeister moves of type~\II~and type~\III~do not change the writhe,
so $y^{-w(D)}$ is unchanged and the regular isotopy invariance of $L_D$
implies invariance of $F_D$ under type~\II~and type~\III.
Under a type~I move, $w(D)$ changes by $\pm 1$ and $L_D$ changes by a factor of $y^{\pm 1}$
by \eqref{eq:kink_relations_general}, so the product $y^{-w(D)}L_D$ is unchanged.
Hence $F_D$ is an ambient isotopy invariant.

For the uniqueness statement, Lemma~\ref{lem:uniq_regular} implies $L\equiv L'$ under regular isotopy.
Applying the same writhe normalization to both yields $F\equiv F'$ under ambient isotopy.
\end{proof}
Assume that $y=1$, we have Corollary~\ref{cor:y1uniq}.  
\begin{cor}\label{cor:y1uniq}
Let $f$ and $f'$ be two link invariants with values in a commutative ring $R$
such that
\[
a\,f(L_+) + b\,f(L_-) + c\,f(L_\infty) + d\,f(L_0)=0
\]
for every skein quadruple $(L_+,L_-,L_\infty,L_0)$,
where $a,b,c,d\in R$ are invertible.
If
\[
f(U)=f'(U)
\]
for the unknot $U$,
then $f\equiv f'$.
\end{cor}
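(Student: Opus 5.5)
We plan to obtain Corollary~\ref{cor:y1uniq} as the specialization $y=1$ of Lemma~\ref{lem:uniq_regular}, rerunning its two-step argument with ambient isotopy in place of regular isotopy. When $y=1$, the kink relations \eqref{eq:kink_relations_general} say that a Reidemeister move of type~I leaves $f$ unchanged. So $f$ and $f'$, regarded as functions of diagrams, are invariant under all three Reidemeister moves, and the hypothesis that they are link invariants is consistent with this. We put $F=f-f'$. The function $F$ is again a link invariant, it satisfies the same four-term relation \eqref{eq:4term_general} because that relation is $R$-linear, and $F(U)=0$. We will show that $F\equiv 0$.

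\emph{Step 1 (trivial links).} We induct on $r$ to show $F(O^r)=0$. Take a one-crossing diagram of the $r$-component unlink made from a curl on one component. Its two crossing versions $L_+$ and $L_-$ both represent the link $O^r$. One smoothing is a diagram of $O^r$ and the other is a diagram of $O^{r+1}$. Substituting into \eqref{eq:4term_general} gives
\[
(a+b)F(O^r) + c\,F(O^{r+1}) + d\,F(O^r)=0
\]
(with $c$ and $d$ swapped if the curl is oriented the other way). Because $c$ and $d$ are invertible, $F(O^{r+1})$ is a multiple of $F(O^r)$, and so it vanishes by induction starting from $F(U)=0$. Since $F$ is a link invariant, it vanishes on every diagram of a trivial link.

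\emph{Step 2 (general links).} We use induction on the crossing number $c(D)$ of a diagram, arguing by a minimal counterexample as in Lemma~\ref{lem:uniq_regular}. Fix $D$ and assume $F$ vanishes on every diagram with fewer crossings. Choose a sequence of crossing changes that turns $D$ into a diagram of a trivial link, for instance by making it monotone as in Definition~\ref{def:monotone}. At each change, both smoothings have fewer crossings, so $F$ vanishes on them. The relation \eqref{eq:4term_general} then reduces to $aF(D^{(i)}) + bF(D^{(i+1)})=0$, taking $D^{(i)}$ as $L_+$ or $L_-$ as appropriate. Since $a$ and $b$ are units, $F(D^{(i)})=0$ holds if and only if $F(D^{(i+1)})=0$. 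The last diagram in the sequence represents a trivial link, so Step~1 applies to it, and we conclude $F(D)=0$.

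\emph{Main obstacle.} The delicate point is Step~1, and specifically checking that the unknotting sequence in Step~2 genuinely ends at a trivial link. Every diagram along that sequence keeps $c(D)$ crossings, so the induction must be on diagram crossing number rather than link crossing number. This is harmless because $F$ is a link invariant, and each smoothing is a diagram with strictly fewer crossings. In Step~1 one also has to check that both $L_+$ and $L_-$ of the curl diagram are isotopic to $O^r$; this holds because a type~I move removes either crossing. Finally, the case $c(D)=0$ in Step~2 is exactly the unlink, which Step~1 already covers.
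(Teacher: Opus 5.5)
Your proposal is correct and follows the paper's proof essentially step for step. You set $F=f-f'$ and make $F$ vanish on all unlinks via the one-crossing curl and the invertibility of $c$ (or $d$); you then propagate vanishing along an unknotting sequence of crossing changes, using that both smoothings have fewer crossings and that $a,b$ are units. The only difference is cosmetic: you induct on diagram crossing number rather than taking a minimal counterexample in link crossing number, which amounts to the same argument. Your bookkeeping of which smoothing is $O^{r+1}$ in Step~1 is, if anything, more careful than the paper's.
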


\begin{proof}
Let $F=f-f'$. It suffices to show that $F(L)=0$ for every link $L$.

Since $f(U)=f'(U)$, we have $F(U)=0$.
Let $L_+$ and $L_-$ be the knots represented by the positive and negative one-crossing knot diagrams,
let $L_0$ be the link represented by the zero-crossing $2$-component diagram,
and let $L_\infty$ be the knot represented by the zero-crossing knot diagram.
Since $L_+$, $L_-$, and $L_\infty$ all represent the unknot $U$, we obtain
\[
F(L_0)=-c^{-1}(a+b+d)F(U)=0.
\]
Repeating the same argument inductively, we obtain
\[
F(U_{\ell})=0
\qquad\text{for every trivial link } U_{\ell}.
\]

Assume for contradiction that there exists a link $L$ such that $F(L)\neq 0$.
Choose such an $L$ with minimal crossing number $c(L)$, and fix a diagram $D$
realizing $c(D)=c(L)$.

By changing crossings of $D$ finitely many times,
we obtain a diagram of a trivial link.
Let
\[
D=D^{(0)},D^{(1)},\dots,D^{(r)}
\]
be the sequence of diagrams obtained by successive crossing changes,
and let $L^{(i)}$ be the link represented by $D^{(i)}$.
Then $L^{(r)}$ is a trivial link.

For each $i$, applying the skein relation at the crossing changed from $D^{(i)}$ to $D^{(i+1)}$,
we obtain
\[
a\,F(L^{(i)}) + b\,F(L^{(i+1)}) + c\,F((L^{(i)})_\infty) + d\,F((L^{(i)})_0)=0.
\]
Since smoothing reduces the crossing number,
\[
c((L^{(i)})_\infty)<c(L), \qquad c((L^{(i)})_0)<c(L),
\]
and by minimality of $c(L)$ we have
\[
F((L^{(i)})_\infty)=F((L^{(i)})_0)=0.
\]
Hence
\[
F(L^{(i+1)})=-b^{-1}a\,F(L^{(i)}),
\]
so in particular
\[
F(L^{(i)})\neq 0 \iff F(L^{(i+1)})\neq 0.
\]
Therefore $F(L^{(r)})\neq 0$.
But $L^{(r)}$ is a trivial link, so $F(L^{(r)})=0$, a contradiction.
Thus $F\equiv 0$, and hence $f\equiv f'$.
\end{proof}
\subsection{A direct uniqueness proof for the coefficient polynomials $\alpha_n$}\label{subsec:uniq_direct_alpha}

\begin{remark}\label{rem:direct_alpha}
The uniqueness of the coefficient polynomials $\alpha_n(D;y)$
can also be shown directly from the inductive construction
based on the complexity $cd(D)=(c(D),d_{\bm a}(D))$,
without invoking Lemma~\ref{lem:uniq_regular}.
We include a proof sketch for completeness.
\end{remark}

\begin{proof}[Sketch of direct proof]
First determine $\alpha_n(O^r;y)$ for trivial link diagrams $O^r$.
Using the skein relation together with the normalization $\alpha_n(O;y)=\delta_{n,0}$,
one obtains the recurrence
\[
(y+y^{-1})\alpha_n(O^r;y)
=
\alpha_n(O^{r+1};y)+\alpha_{n-1}(O^r;y).
\]
This uniquely determines
\[
\alpha_n(O^r;y)
=
\begin{cases}
(-1)^n {r-1\choose n}(y+y^{-1})^{r-n-1}, & n\ge0,\\
0, & n<0.
\end{cases}
\]

Next proceed by induction on $cd(D)=(k,s)$.
If $s=0$, then $D$ is monotone and the above formula determines $\alpha_n(D;y)$ uniquely.
Assume $s\ge 1$ and that $\alpha_n$ is uniquely determined for diagrams with smaller $cd$.
Let $p$ be a warping crossing of $(D, \bm{a})$. Then
\[
\alpha_n(D;y)
=
-\alpha_n(D_{-\epsilon(p)};y)
+
\alpha_{n+\Delta(p1)-1}(D_{p1};y)
+
\alpha_{n+\Delta(p2)-1}(D_{p2};y),
\]
and each term on the right-hand side has strictly smaller $cd$.
By induction, each term is uniquely determined, hence so is $\alpha_n(D;y)$.
\end{proof}

Combining Remark~\ref{rem:direct_alpha} with  Lemma \ref{propertyofalpha}, we obtain the following uniqueness statement.  
\begin{prop}
We can uniquely compute $\alpha_n (D;y)$ $(n=0, \pm 1, \dots)$ using (1)--(3) in Theorem \ref{maintheorem1}. In particular, $\alpha_n(D; y) = 0$ except for finitely many $n$, and $\alpha_n(D; y) = 0$ for $n < 0$.
\end{prop}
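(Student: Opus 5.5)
We must prove the final proposition: $\alpha_n(D;y)$ is uniquely computable from (1)--(3), vanishes for $n<0$, and is nonzero for only finitely many $n$. The plan is to combine the computability argument of Lemma~\ref{propertyofalpha} with the uniqueness induction of Remark~\ref{rem:direct_alpha}, organised as one induction on the complexity $cd(D)=(c(D),d_{\bm a}(D))$ in lexicographic order.

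\textbf{Base case.} First we settle zero-crossing unlinks $O^r$. Take a one-crossing diagram whose two smoothings are $O^r$ and $O^{r+1}$. Relation \eqref{equ3}, with the kink rule \eqref{equ1}, gives the recurrence $(y+y^{-1})\alpha_n(O^r;y)=\alpha_n(O^{r+1};y)+\alpha_{n-1}(O^r;y)$. Together with \eqref{equ2}, this forces \eqref{equtrivial1} by induction on $r$, so no freedom remains. In particular the values vanish for $n<0$ and for $n\ge r$.

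For a monotone $D$ (or a connected sum of monotone diagrams), we use Reidemeister moves, invariant under (1) up to the factor $y^{w(D)}$. These reduce $D$ to $O^r$, so \eqref{equmonotone} is forced, again with finite support in $n\ge 0$.

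\textbf{Inductive step.} Suppose $s=d_{\bm a}(D)>0$ and choose a warping crossing $p$. Rewrite \eqref{equ3} to isolate $\alpha_n(D;y)$. The three diagrams on the right, $D_{-\epsilon(p)}$, $D_{p1}$ and $D_{p2}$, have strictly smaller $cd$ by \eqref{equcd}. Hence their coefficients are uniquely determined by the induction hypothesis, and so is $\alpha_n(D;y)$, for every $n$ simultaneously.

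Any two families satisfying (1)--(3) therefore agree on every $(D,n)$. By Lemma~\ref{warpingcrosspt}, the result does not depend on the choice of $p$; this is only needed for consistency, since existence is Theorem~\ref{maintheorem1}.

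\textbf{Support properties.} These follow along the same recursion. Vanishing for $n<0$ is an induction on crossing number. When $n<0$, the shifted indices $n+\Delta(pi)-1\le n<0$, because $\Delta(pi)\le 1$. So the smoothing terms vanish and $\alpha_n(D_+)=-\alpha_n(D_-)$. Changing crossings leads to the monotone case, which is zero for $n<0$.

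For finite support, each recursion step involves only finitely many shifted indices. By induction, each right-hand term is supported in a bounded range $[0,N_i]$, so $\alpha_n(D;y)$ is supported in $[0,\max_i N_i+1]$.

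\textbf{Main obstacle.} The key point is that $\Delta(pi)\le 1$ always holds: a splice changes the component count by $+1$, $0$, or $-1$. This is what guarantees that negative indices stay negative in the recursion. I would verify it by a short local argument: a splice reconnects exactly two arcs, so it changes the number of components by at most one.
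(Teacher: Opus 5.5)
Your proposal is correct and follows the paper's route: the paper obtains this proposition by combining the computation and vanishing argument of Lemma~\ref{propertyofalpha} with the $cd$-induction of Remark~\ref{rem:direct_alpha}, and your explicit check that $\Delta(pi)\le 1$ makes precise a step the paper uses without comment. One small correction: at a crossing between different components $\Delta(pi)=-1$, so the shifted index is $n-2$ and the support bound must be $\max_i N_i+2$ rather than $\max_i N_i+1$. For example, at a warping crossing of the standard $T(2,4)$ diagram the three right-hand diagrams have $\alpha$ supported in $[0,2]$, yet $\alpha_4=y+y^{-1}\neq 0$; this does not affect finiteness.
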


\section{Discussion}\label{sec:discussion}

We have established the existence of B-type coefficient polynomials $\alpha_n(D;y)$
within the framework of formal power series,
extending Kawauchi’s A-type construction
to the intrinsically four-term skein setting.
The essential new phenomenon in the B-type case
is that the inductive closure cannot be achieved
within the class of monotone diagrams alone.
The necessary enlargement to connected sums of monotone diagrams
reflects a genuine structural difference
between three-term and four-term skein theories.

From this viewpoint,
the generating series of the B-type coefficient polynomials should be regarded
not merely as a reconstruction of the Kauffman polynomial \cite{Kauffman1990},
but as a coefficient-level refinement
of the four-term skein structure.
Each coefficient polynomial defines a link invariant,
yielding an infinite sequence of invariants
associated naturally with the B-type skein relation.

The present paper focuses on the construction
and well-definedness of this theory.
Further investigations,
including structural comparisons with mutation phenomena
and interactions with random knot models,
will be pursued elsewhere.

\bibliographystyle{amsplain}
\bibliography{BtypeRef}

\end{document}